\documentclass[10pt]{amsart}

\usepackage{amssymb}
\usepackage{amsthm}
\usepackage{mathtools}
\usepackage{stmaryrd}
\usepackage{mathrsfs}
\usepackage{tikz}
\usepackage{tikz-cd}
\usetikzlibrary{arrows}

\newtheorem{thm}{Theorem}[section]
\newtheorem{lem}[thm]{Lemma}

\newtheorem{cor}[thm]{Corollary}

\theoremstyle{definition}

\newtheorem{defn}[thm]{Definition}

\newtheorem{examps}[thm]{Examples}

\newcommand{\si}{^{(\infty)}}

\theoremstyle{remark}

\numberwithin{equation}{section}

\newcommand{\bC}{{\mathbb C}}

\newcommand{\bF}{{\mathbb F}}

\newcommand{\bN}{{\mathbb N}}

\newcommand{\bZ}{{\mathbb Z}}

\newcommand{\cC}{{\mathcal C}}

\newcommand{\cH}{{\mathcal H}}

\newcommand{\cO}{{\mathcal O}}

\newcommand{\cT}{{\mathcal T}}
\newcommand{\cU}{{\mathcal U}}

\newcommand{\cZ}{{\mathcal Z}}

\newcommand{\acts}{\curvearrowright}

\DeclareMathOperator{\id}{id}

\DeclareMathOperator{\Aut}{Aut}

\DeclareMathOperator{\Ker}{ker}

\begin{document}

\title{Selfless Reduced Crossed Product $C^{*}$-Algebras Arising from Almost Periodic Actions}

\author[S. Ohshima]{Syuichi Ohshima}
\thanks{Department of Mathematics, Faculty of Science, Hokkaido University,
Kita 10, Nishi 8, Kita-Ku, Sapporo, Hokkaido, 060-0810, Japan.
E-mail: \texttt{oshima.shuichi.c4@elms.hokudai.ac.jp}}

\begin{abstract}
We prove that the reduced crossed product $C^{*}$-algebras of almost periodic actions of countable discrete groups having a topologically free extreme boundary on unital, separable, simple, exact, $\cZ$-stable and monotracial $C^{*}$-algebras are selfless. 
We combine Ozawa's ``tree-graded" space method with the theory of strong convergence of indexed families for the proof.
  
\end{abstract}

\maketitle%

\markboth{S. OHSHIMA}{Selfless Reduced Crossed Product $C^{*}$-Algebras Arising from Almost Periodic Actions}

\section{Introduction}

The notion of \emph{strict comparison}, introduced by Blackadar in \cite{Bla88}, provides a useful algebraic perspective on $C^{*}$-algebras. 
This notion can be reformulated in terms of a certain order-theoretic property, known as \emph{almost unperforation}, of the Cuntz semigroup---an important invariant of $C^{*}$-algebras introduced by Cuntz in \cite{Cun78}. 
Strict comparison also has far-reaching applications; for example, it plays a central role in Elliott's classification program (see, e.g., \cite{CGSTW23, CETW22, EGLN25, GLN20a, GLN20b, Tom08, Win18}).
We note that the three regularity properties, namely strict comparison, $\cZ$-stability, and finite nuclear dimension, are equivalent for a unital, separable, simple, infinite-dimensional, nuclear $C^{*}$-algebra with finitely many extremal traces, as established in \cite{R04, Win12, MS12}. 

Although strict comparison is a fundamental and natural property of $C^{*}$-algebras, establishing it is often highly nontrivial. 
This difficulty is well illustrated by the fact that, until 2023, it was not known whether the reduced group $C^{*}$-algebra of the free group $\mathbb{F}_{n}$ satisfies strict comparison for $2 \leq n < \infty$. 
(The case of $\mathbb{F}_{\infty}$ was previously settled by Robert \cite[Proposition 6.3.2]{Rob12}, who attributed the result to R{\o}rdam.) 
In 2023, Amrutam, Gao, Kunnawalkam Elayavalli, and Patchell \cite{AGKP25} resolved this question in the affirmative by proving a substantially stronger property: any finitely generated acylindrically hyperbolic group with trivial finite radical and the rapid decay property is \emph{$C^{*}$-selfless} (i.e., its reduced group $C^{*}$-algebra is selfless in the sense of Robert \cite{Rob25} with respect to the canonical trace).

The main focus of this paper is the notion of a \emph{selfless $C^{*}$-probability space}, introduced by Robert in \cite{Rob25}. 
Notably, for a tracial $C^{*}$-probability space, selflessness implies simplicity, uniqueness of the trace, stable rank one, and strict comparison. 
We say that a $C^{*}$-probability space $(A,\phi)$ is \emph{selfless} if $A\not\simeq \bC$ and the first factor embedding $\theta$ into the reduced free product $C^{*}$-probability space is \emph{existential}; that is, there exist an ultrafilter $\cU$ (on a set $I$) and an embedding $\sigma : (A, \phi)*(A, \phi)\to (A^{\cU}, \phi^{\cU})$ such that $\sigma\circ\theta$ agrees with the diagonal embedding $\iota$ of $(A, \phi)$ into $(A^{\cU}, \phi^{\cU})$:

\[\begin{tikzcd}
	{(A,\phi)} && {(A^{\cU},\phi^{\cU})} \\
	& {(A,\phi)*(A,\phi)}
	\arrow["\iota", hook, from=1-1, to=1-3]
	\arrow["{\theta}", hook, from=1-1, to=2-2]
	\arrow["\sigma", hook, from=2-2, to=1-3]
\end{tikzcd}\]

We note that, by \cite[Theorem 2.6]{Rob25}, $(A, \phi)$ is selfless if and only if the first factor embedding 
$(A, \phi) \to (A, \phi) * (C, \kappa)$
is existential for some $C \neq \bC$.

Selflessness has since been established for broad classes of $C^{*}$-probability spaces,
for example, simple, exact, $\cZ$-stable, and monotracial $C^{*}$-probability spaces are selfless \cite[Theorem 3]{Ozw25}.
We refer the reader to \cite{AGKP25, BS26, FKCP25, FKCP26, GJEPR26, HKPR25, HKR25, KPT25, RTV25, Rob25, Vig26, Yan25} for further examples of selfless $C^{*}$-probability spaces.
Notably, in \cite[Theorem 1]{Ozw25}, Ozawa proved that every countable discrete group with a topologically free extreme boundary action is $C^{*}$-selfless.
In particular, non-elementary free product groups are $C^{*}$-selfless.

In general, it is still unknown whether strict comparison is preserved under inductive limits, minimal tensor products and reduced crossed products.
However, selflessness is preserved under inductive limits \cite[Lemma 1.4]{Rob25}. Moreover, for exact $C^{*}$-probability spaces, it is preserved under minimal tensor products \cite[Theorem 2]{Ozw25}.
Another very natural operation to consider is the crossed product.
Gao, Junge, Kunnawalkam Elayavalli, Patchell and Robert proved the following:

\begin{thm}\textup{\cite{GJEPR26}}
If a countable discrete group $\Gamma$ with the $PHP$ property acts on a unital, simple, monotracial $C^{*}$-probability space $\left(A, \phi\right)$ by approximately inner automorphisms, then the reduced crossed product $\left(A\rtimes_{r}\Gamma, \phi\circ E_{A}\right)$ is selfless.

\end{thm}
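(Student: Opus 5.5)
Since the statement is quoted from \cite{GJEPR26}, what follows is a plan for how I would reprove it, not a reconstruction of their argument. The idea is to use the reformulation after the definition of selflessness, \cite[Theorem 2.6]{Rob25}. Write $B = A\rtimes_r\Gamma$ and $\psi=\phi\circ E_A$. It suffices to find a tracial $C^*$-probability space $(C,\kappa)$ with $C\neq\bC$ and an embedding of $(B,\psi)*(C,\kappa)$ into $(B^{\cU},\psi^{\cU})$ that restricts to the diagonal embedding on $B$. I would take $C=C^*_r(\bF_2)$ with its canonical trace, or $C^*_r(\bZ)$. The generators should come from group unitaries $\lambda_{g_n}$ (and $\lambda_{h_n}$) with $g_n,h_n\in\Gamma$ escaping to infinity. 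These are chosen using the PHP property so that, along $\cU$, they generate a copy of $C$ free from $B$.

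First, the distributional part. For any finite family $x_1,\dots,x_k\in B$ with $\psi(x_i)=0$, and any reduced words in the $\lambda_{g_n}$, the trace of an alternating product should tend to $0$. For $x_i$ in the algebraic span $\sum_{s\in F} A\lambda_s$ with $F$ finite, an alternating product is a sum of terms $a\lambda_w$. Here $w$ ranges over words in $F$ and the $g_n^{\pm1}$, and $\psi(a\lambda_w)$ vanishes unless $w=e$. So the only group-theoretic input needed is that $g_n$ is eventually in ``free position'' relative to each finite set $F$: no nontrivial alternating word in $F\setminus\{e\}$ and $g_n^{\pm k}$ is trivial. PHP (via the hyperbolic/ping-pong dynamics on the boundary it encodes) supplies this. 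A density argument using $\|E_A\|\le 1$ then passes from the algebraic span to all of $B$. This gives a trace-preserving $*$-homomorphism on the algebraic free product. Simplicity and monotraciality of $A$ are used here: together with approximate innerness of the action, they make $\psi$ the unique trace on $B$ and $B$ simple. Hence the trace-preserving map on the algebraic free product is well defined and injective on the reduced free product, provided it is contractive for the reduced norm.

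The main obstacle is exactly that norm estimate. One needs, for $x=\sum_j x_{0,j}\,\lambda_{g_n}^{\epsilon_{1,j}}x_{1,j}\cdots$, that $\limsup_{\cU}\|x\|_{B}$ is at most the norm of the corresponding element in $(B,\psi)*(C,\kappa)$. This is a strong-convergence statement with coefficients in the arbitrary algebra $A$, not just in $\bC$. Here I would use the defining feature of PHP: there are projections in ``hyperbolic position'' on $\ell^2\Gamma$ (Powers-type averaging with $L^2$ control). I would tensor these with $L^2(A,\phi)$ to obtain, on $L^2(B,\psi)=L^2(A)\otimes\ell^2\Gamma$, a decomposition adapted to the reduced words. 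The crucial step is that the relevant projections commute with the left action of $A$ up to the $\Gamma$-twist. Approximate innerness of the action would be used to replace $\alpha_g$ by $\mathrm{Ad}\,w_g$ up to small error. Locally, $B$ then looks like a twisted $A\otimes C^*_r\Gamma$, to which the scalar PHP norm estimates apply amplified by $A$ (no exactness of $A$ is needed since the estimate is of Haagerup type).

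Finally, I would combine the trace-preserving property with the norm bound to get an isometric embedding of $(B,\psi)*(C,\kappa)$ into $(B^{\cU},\psi^{\cU})$ extending the diagonal map, and conclude by \cite[Theorem 2.6]{Rob25}. I expect essentially all the difficulty to lie in the coefficient-twisted norm estimate of the previous paragraph.
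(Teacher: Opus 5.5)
The paper does not prove this statement; it is quoted from \cite{GJEPR26}. Your plan, however, already fails at the distributional step, before any norm estimate.

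A $\psi$-centred $x\in B$ need not lie in $\Ker E_A$. Only $\phi(E_A(x))=0$ is forced, so $x$ generally has a nonzero coefficient at $s=e$. In an alternating product $x_1\lambda_{g_n}^{k_1}x_2\lambda_{g_n}^{k_2}\cdots$ the letter $e\in F$ therefore does occur. Words such as $g_n^{k}\,e\,g_n^{-k}$ collapse to $e$ however $g_n$ is chosen, and they contribute terms $\phi(a\,\sigma_{g_n^{k}}(b))$ with no reason to vanish. Restricting to words in $F\setminus\{e\}$ silently assumes $E_A(x_i)=0$.

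Here is a concrete counterexample within the hypotheses. Let $\Gamma$ act trivially on any $A\neq\bC$; this action is inner, hence approximately inner. Then $B=A\otimes C^*_r(\Gamma)$ and each $\lambda_g$ commutes with $A$. For a self-adjoint unitary $a\in A$ with $\phi(a)=0$ one gets $\psi(a\lambda_{g_n}a\lambda_{g_n}^{*})=\phi(a^2)=1$ for every $n$, whereas freeness of $a$ from a Haar unitary forces $0$.

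So group unitaries can never be asymptotically free from $B$ over $\bC$. At best, and only with control on $\sigma_{g_n}$, they are free from $B$ \emph{with amalgamation over $A$}. That is exactly what Lemma~\ref{thm:exZ} of the present paper provides, using $\sigma_{z_n}\to\id$ (i.e.\ almost periodicity). It is also why that lemma alone does not give selflessness.

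The missing idea is a mechanism that also averages out $A$. The free generators must be built from elements of $A$ together with group elements (or with something free over $A$), so that, e.g., $X_n^*aX_n\to\phi(a)$ for $a\in A$. This is where simplicity, uniqueness of the trace (Dixmier-type averaging) and approximate innerness genuinely enter. In \cite{GJEPR26} this is packaged in selfless completely positive maps.

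In the present paper, under the stronger hypotheses of exactness and $\cZ$-stability, the corresponding step is Lemma~\ref{thm:strong}. There Ozawa's elements $T_\lambda=(2|F_\lambda|)^{-1/2}\sum_n(u_n^{(\lambda)}+(u_n^{(\lambda)})^*)\otimes l_n$ satisfy $T_\lambda^*aT_\lambda\to\phi(a)$. Together with Lemma~\ref{thm:ToeplitzIsom} and strong convergence of amalgamated free products, they convert freeness over $A$ into plain freeness.

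Your intended use of approximate innerness also points the wrong way. Replacing $\lambda_g$ by $w_g^*\lambda_g$ with $\sigma_g\approx\operatorname{Ad}w_g$ makes the generators approximately commute with $A$, i.e.\ even further from free. Until the generators are redesigned, the coefficient-twisted norm estimate you single out is premature. The ``Haagerup-type, no exactness needed'' amplification would in any case need an actual argument.
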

Their approach is based on the framework of selfless completely positive maps and an adaptation of Ozawa's PHP technique. Approximate innerness plays a crucial role in their argument.

This naturally leads to the question of whether one can construct examples of selfless crossed products arising from other group actions. Resolving this problem, however, requires a fundamentally different approach. 
In this paper, we establish the following:

\begin{thm}
Let $(A, \phi)$ be a separable, simple, exact, $\cZ$-stable, and monotracial $C^{*}$-probability space 
and let $\Gamma$ be a countable discrete group 
having a topologically free extreme boundary. 
If $\sigma : \Gamma \curvearrowright A$ is an almost periodic action, 
then $(A\rtimes_{r}\Gamma, \phi\circ E_{A})$ is selfless.
\end{thm}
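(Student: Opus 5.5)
We construct, for some ultrafilter $\cU$, an embedding of $(A\rtimes_r\Gamma)*(C,\kappa)$ into $(A\rtimes_r\Gamma)^{\cU}$ that restricts to the diagonal embedding on $A\rtimes_r\Gamma$. By \cite[Theorem 2.6]{Rob25} this suffices. We combine Ozawa's two inputs. The first is his selflessness of simple, exact, $\cZ$-stable, monotracial $(A,\phi)$ \cite[Theorem 3]{Ozw25}, which supplies Haar unitaries in $A^{\cU}$ that are free from $A$. The second is his tree-graded space argument for groups with a topologically free extreme boundary \cite[Theorem 1]{Ozw25}, which supplies sequences $g_n\in\Gamma$ (products of loxodromic-type elements chosen via the boundary action) such that $\lambda_{g_n}$ is asymptotically free from $C^*_r\Gamma$ in the strong sense. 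The role of almost periodicity is to make the action asymptotically trivial along suitable subsequences. Since $\sigma$ is almost periodic, the closure $K$ of $\sigma(\Gamma)$ in $\Aut(A)$ (point-norm topology) is compact. Hence for any sequence $g_n$ we can pass to a subnet along which $\sigma_{g_n}$ converges pointwise. Moreover, using compactness of $K$ and the fact that $K$ is a compact group, we can multiply by elements $h_n$ with $\sigma_{h_n}\to\id$ so that the chosen free-like elements satisfy $\sigma_{g_n}\to \id$ pointwise.

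The key steps are as follows.

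(1) From the boundary action, produce elements $g_n\in\Gamma$ such that $u=(\lambda_{g_n})_n\in (C^*_r\Gamma)^{\cU}$ generates, together with $C^*_r\Gamma$, a copy of $C^*_r\Gamma * C^*_r\bZ$ (or a reduced free product with a nontrivial algebra). We redo Ozawa's tree-graded construction, now restricting to $g_n$ in the finite-index-like "almost kernel" $\{g:\sigma_g\approx\id\}$ on finite sets. This uses compactness of $K$: the set $\{g : \|\sigma_g(a)-a\|<\varepsilon,\ a\in F\}$ is syndetic. We need the ping-pong/tree-graded estimates to survive this restriction, and we obtain them by choosing the $g_n$ inside a syndetic set via the extreme boundary's dynamics.

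(2) Show that $u$ asymptotically commutes with $A$ in $A\rtimes_r\Gamma$. Indeed $\lambda_{g_n} a\lambda_{g_n}^* = \sigma_{g_n}(a)\to a$.

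(3) Verify freeness in $(A\rtimes_r\Gamma)^{\cU}$ at the level of the trace $\phi\circ E_A$. For a reduced word in $A\rtimes_r\Gamma$ and powers of $u$, we apply $E_A$ and reduce to group elements. Using (2), each word becomes approximately $a\,\lambda_{w}$, where $w$ is a reduced word in $\Gamma$ and the $g_n$, whose group element is nontrivial by step (1). Its trace therefore vanishes in the limit. This gives a tracial embedding of the algebraic free product.

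(4) Upgrade the tracial statement to an isometric embedding of the reduced free product $C^*$-algebra. Here we use strong convergence of indexed families: the norms of polynomials in $(a\lambda_g)$ and $u$ must converge to those in the reduced free product. We plan to use that $A$ is exact and that $A\rtimes_r\Gamma$ is then exact-like enough. The crossed product can be treated as $A\otimes_{\min}$ with the group, with the almost-periodic twist removed in the limit via (2). Strong convergence for $C^*_r\Gamma$ from Ozawa's argument, tensored with $A$, then yields strong convergence in $A\otimes_{\min} C^*_r\Gamma$ by exactness, and hence in the crossed product.

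The main obstacle is step (4), together with keeping Ozawa's norm estimates valid when the $g_n$ are restricted to the syndetic sets in step (1). To handle this, we would first try to prove that strong convergence for an indexed family is preserved under the operation of tensoring with an exact $C^*$-algebra, carried across to a crossed product by a compact-closure action, via a Fell-absorption-type unitary conjugation.
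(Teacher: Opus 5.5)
\textbf{The proposal has a genuine gap: steps (2) and (3) contradict each other.} You want $u=[\lambda_{g_n}]_{\cU}$ both to commute asymptotically with $A$ and to be $*$-free from $A\rtimes_r\Gamma$ for the faithful trace $\tau:=(\phi\circ E_A)^{\cU}$. This cannot happen. Take $0\neq a\in A$ with $\phi(a)=0$. Since $u$ is centred, freeness of $u$ from $A$ forces $\tau(aua^*u^*)=0$. But $ua^*u^*=[\sigma_{g_n}(a^*)]_{\cU}=a^*$, so $\tau(aua^*u^*)=\phi(aa^*)>0$.

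In your word reduction the failure appears as follows. A centred letter from $A\rtimes_r\Gamma$ can be $a\lambda_e$ with $\phi(a)=0$. Such a letter carries no group element, so the group word $w$ collapses to $e$ even though the $C^*$-word is reduced.

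What asymptotically $A$-central, asymptotically free group elements actually give is freeness \emph{with amalgamation over $A$}. Namely, $A\rtimes_r\Gamma$ and $u$ generate $A\rtimes_{\sigma*\id,r}(\Gamma*\bZ)\cong(A\rtimes_r\Gamma)*_A(A\otimes C^*_r(\bZ))$. This is only the paper's first step: the existential inclusion $A\rtimes_r\Gamma\subset A\rtimes_{\sigma*\id,r}(\Gamma*\bF_2)$. The paper obtains it from an axial sequence $z_n=w_{k_n}w_n^{-2}w_{k_n}$ with $\sigma_{z_n}\to\id$, which is a concrete version of your step (1), together with Ozawa's tree argument.

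Norm continuity there comes from Ozawa's Theorem 4, using that $C(\overline T)$ lies in the commutant of $A$. It does not come from identifying $A\rtimes_r\Gamma$ with $A\otimes_{\min}C^*_r(\Gamma)$ as in your step (4); that identification is unavailable for a nontrivial almost periodic action.

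\textbf{Missing idea: converting $A$-amalgamated freeness into scalar freeness.} This second step is where $\cZ$-stability and monotraciality of $A$ must enter. You cite Ozawa's Theorem 3 but never use it. The paper proceeds in four moves:
\begin{enumerate}
\item It takes Ozawa's elements $T_\lambda=(2|F_\lambda|)^{-1/2}\sum_n(u^{(\lambda)}_n+u^{(\lambda)*}_n)\otimes l_n\in A\otimes\cO_\infty$. These are built from unitaries of $A$, so they do not commute with $A$, and they model an isometry with $T^*aT=\phi(a)$.
\item It proves $\cT_{A\rtimes_r\Gamma,\phi\circ E_A}\cong\cT_{A,\phi}*_A(A\rtimes_r\Gamma)$.
\item It applies the Gao--Kunnawalkam Elayavalli strong convergence theorem for amalgamated free products. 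This embeds $(A\rtimes_r\Gamma)*(\cC_1,\omega)$ into $\bigl((A\rtimes_r\Gamma)*_A(A\otimes C^*_r(\bF_2))\bigr)^{\cU}$.
\item Composing with the existential inclusion from the first step then yields selflessness.
\end{enumerate}

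The free semicircular element obtained at the end is a mixture of unitaries of $A$ with elements of $C^*_r(\bF_2)$, which are realised through the asymptotically $A$-central group elements. Neither ingredient suffices alone. A Haar unitary $v\in A^{\cU}$ free from $A$ need not be free from $\lambda_\Gamma$: one has $\lambda_g v\lambda_g^*=\sigma_g(v)$, so $\sigma_g$-invariance of $v$ would give $\tau(\lambda_g v\lambda_g^*v^*)=1$.
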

For a group action on a $C^{*}$-algebra $\sigma:\Gamma\curvearrowright A$, 
if $\overline{\sigma\left(\Gamma\right)}<\Aut\left(A\right)$ is compact in the point-norm topology,
then $\sigma$ is called an almost periodic action.

\vspace{\baselineskip}

\noindent\textbf{Outline of the proof.}
First, we observe that the inclusion
\[
A \rtimes_{\sigma,r} \Gamma
\subset
A \rtimes_{\sigma * \mathrm{id},r} (\Gamma * \mathbb{F}_{2})
\]
is existential by adapting Ozawa's ``tree-graded'' space method.
Next, we establish the following isomorphism on Toeplitz-Pimsner algebras:
\[
\left(
\mathcal{T}_{A\rtimes_r\Gamma,\phi\circ E_A},
E_A\circ E_{A\rtimes_r\Gamma}^{\phi\circ E_A}
\right)
\simeq
\left(
\mathcal{T}_{A,\phi}, E_{A}^{\phi}\right)*_{A} \left(A\rtimes_r\Gamma, E_A
\right).
\]
Combining this isomorphism with the theory of strong convergence for indexed families, we obtain a natural embedding:
\[
\bigl(
A\rtimes_{\sigma,r}\Gamma,\,
\phi\circ E_A
\bigr)
*
(\mathcal{C}_1,\omega)
\subset
\bigl(
A\rtimes_{\sigma*\id,r}(\Gamma*\mathbb{F}_2),\,
\phi\circ E_A
\bigr)^{\mathcal U}.
\]
Since
$A \rtimes_{\sigma,r} \Gamma
\subset
A \rtimes_{\sigma * \mathrm{id},r}
(\Gamma * \mathbb{F}_{2})$
is existential, it follows that the natural inclusion
\[
\bigl(
A\rtimes_r\Gamma,\,
\phi\circ E_A
\bigr)
\subset
\bigl(
A\rtimes_r\Gamma,\,
\phi\circ E_A
\bigr)
*
(\mathcal{C}_1,\omega)
\]
is existential.

\vspace{\baselineskip}

\noindent\textbf{Notation.}

\begin{itemize}
\item A \emph{$C^{*}$-probability space} is a pair $(A,\phi)$ 
consisting of a unital $C^{*}$-algebra $A$ 
and a distinguished state $\phi$ that is GNS-faithful, 
unless the $C^{*}$-algebra 
under consideration is an ultrapower or an ultraproduct.
Furthermore, if $A$ is monotracial, 
we always choose $\phi$ to be its unique tracial state.
\item Amalgamated free products of $C^{*}$-algebras 
with nondegenerate conditional expectations are 
always taken in the reduced sense.
\item Let $\cU$ be an ultrafilter on a set $I$. 
We denote by $A^{\cU}$ the ultrapower of a $C^{*}$-algebra $A$, 
and by $[a_{i}]_\cU$ the element of $A^{\cU}$ represented 
by a bounded family $(a_{i})_{i\in I}$ in $A$.
\item For the reduced crossed product $C^*$-algebra $A\rtimes_{r}\Gamma$, let $E_{A}:A\rtimes_{r}\Gamma\to A$ denote the canonical conditional expectation, 
which is given by $E_{A}(a\lambda_s)=a\delta_{s,e}$ for $a\in A$ and $s\in\Gamma$.
\end{itemize}

\vspace{\baselineskip}

\noindent\textbf{Acknowledgment}:
This is a part of the author's master's thesis, written under the supervision of Professor Yuhei Suzuki at Hokkaido University. The author is deeply grateful to Professor Suzuki for his helpful comments and continuous support. 
 
 \vspace{\baselineskip}
 
 \noindent\textbf{AI statement}:
During the preparation of this manuscript, the author used Gemini 3.1 Pro for English language editing and grammatical corrections. All mathematical contents and results were generated solely by the human author. 
 
\section{Preliminaries}

\subsection{Extreme boundary action}

In this subsection, we recall the notion of an extreme boundary action of a discrete group.

\begin{defn}

Let $\Gamma$ be a countable discrete group, 
and let $\Gamma\curvearrowright X$ 
be an action on a compact topological space $X$. 
The action $\Gamma\curvearrowright X$ is called
 an \emph{extreme boundary action} if it is minimal 
 and \emph{extremely proximal}, in the sense 
 that for any pair of non-empty open subsets $U$ 
 and $V$ of $X$, 
 there exists $g\in \Gamma$ 
 such that $g(X\setminus U)\subset V$.

\end{defn}

For a compact $\Gamma$-space $X$
 and $x\in X$, 
 let $\Gamma_{x}^{\circ}$ denote the subgroup consisting of elements of 
 $\Gamma$ that act as the identity on some neighborhood of $x$. 
 The compact $\Gamma$-space $X$ is said to be 
 \emph{topologically free} if $\Gamma_{x}^{\circ}=\{1\}$ for all $x\in X$.
Note that if a countable discrete group $\Gamma$ admits a topologically free extreme boundary, then it admits a second countable one. 

We say that a sequence $(z_n)_{n}$ in $\Gamma$ is \emph{axial} 
if there exists a topologically free extreme boundary action $\Gamma\acts X$
 and distinct points $z_+, z_- \in X$ satisfying the following two conditions:
\begin{enumerate}
\item For any neighborhoods $U_+$ of $z_+$ and $U_-$ of $z_-$, 
one has $z_n (X\setminus U_-) \subset U_+$ 
for all sufficiently large $n$ 
(or, equivalently, $z_n^{-1}(X \setminus U_+) \subset U_-$ for all sufficiently large $n$).
\item The $\Gamma$-action on $\{z_+, z_-\}$ is free, in the sense that 
$g\{z_+, z_-\}\cap\{z_+, z_-\}\neq\emptyset$ implies $g=1$.
\end{enumerate}
Note that a second countable extreme boundary $X$ 
with $|X|>2$ admits an axial sequence 
if and only if it is topologically free.

We remark that a discrete group admitting a topologically free extreme boundary is 
$C^{*}$-simple by \cite[Theorem 1.5]{KK17} and \cite[Theorem 2.3]{Gla74}.

Many geometric groups admit an extreme boundary.

\begin{thm}\textup{{\cite[Theorem 1.3]{Yan25}}}\label{thm:boundary}
Let $\Gamma$ be a non-elementary acylindrically hyperbolic group.  
Then $\Gamma$ admits an extremely boundary action on a compact metrizable space. 
If $\Gamma$ contains no nontrivial finite normal subgroup, 
then the action is topologically free.     
\end{thm}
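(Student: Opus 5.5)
The plan is to build the boundary from an acylindrical action on a hyperbolic space and to read off extreme proximality from the north–south dynamics of loxodromic elements. Since $\Gamma$ is non-elementary acylindrically hyperbolic, fix a non-elementary acylindrical action $\Gamma\acts S$ on a separable hyperbolic geodesic space. The Gromov boundary $\partial S$ is generally not compact, so I would first pass to a $\Gamma$-equivariant metrizable compactification $\overline{\partial S}$ that still has north–south dynamics for loxodromics. The candidate is the closure of $\partial S$ in the horofunction compactification of $S$, or the Gelfand spectrum of a separable $\Gamma$-invariant unital $C^*$-subalgebra of $\ell^\infty(\Gamma)$ generated by functions pulled back from $\partial S$ through an orbit map. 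By Zorn's lemma, take a minimal closed $\Gamma$-invariant subset $X\subset\overline{\partial S}$. I would arrange that $X$ is the closure of the limit set $\Lambda\Gamma$ and that $|X|>2$ (non-elementarity).

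Extreme proximality is the next step. The key input is that the pairs $(h^+,h^-)$ of attracting and repelling fixed points of loxodromic elements $h$ are dense in $X\times X$. This follows from minimality together with the standard ping-pong/"cross-ratio" trick. Given two independent loxodromics, one can produce loxodromics whose attracting point lies in any prescribed open set and whose repelling point lies in any other prescribed open set. Then, for non-empty open $U,V\subset X$, choose a loxodromic $h$ with $h^-\in U$ and $h^+\in V$. By north–south dynamics in the compactification, $h^n(X\setminus U)\subset V$ for large $n$. Checking that the north–south dynamics genuinely extends from $\partial S$ to the compactified points of $X$ is the main technical obstacle. To handle it, I would first try to use acylindricity: it forces neighbourhoods of $h^{\pm}$ defined by Gromov products to pull back to open sets in the compactification. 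Minimality of $X$ is built in.

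For topological freeness, assume $\Gamma$ has no nontrivial finite normal subgroup and let $g\in\Gamma_x^\circ$, so $g$ fixes an open set $W\ni x$ pointwise. By density of loxodromic fixed points, $W$ contains the fixed points $h^{\pm}$ of many pairwise independent loxodromic (WPD) elements $h$. Since $g$ fixes $h^\pm$, it lies in the maximal elementary subgroup $E(h)$ of Dahmani–Guirardel–Osin. Moreover, applying the same argument to the conjugates $khk^{-1}$ with fixed points in $W$ shows that $g$ lies in the intersection of $E(h)$ over a family of independent loxodromics $h$ whose fixed points lie in $W$. This intersection is finite. Using minimality and extreme proximality, I would show that the subgroup $\Gamma_x^\circ$ (generated over all such $g$) is normalized by enough elements that the normal closure of these pointwise stabilizers stays inside the maximal finite normal subgroup $K(\Gamma)=\bigcap_h E(h)$. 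Hence $g\in K(\Gamma)=\{1\}$, and the action on $X$ is topologically free.
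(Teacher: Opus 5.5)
The paper gives no proof of this statement; it imports it from [Yan25]. Judged on its own, your outline has a genuine gap exactly where the theorem's content lies.

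Everything rests on a compact metrizable $\Gamma$-space $X$ with the following property: each loxodromic $h$ has north--south dynamics with \emph{single-point} attractor and repeller $h^{\pm}\in X$, and these points are equivariantly identified with $h^{\pm}\in\partial S$. You postulate such a space and leave its verification as ``the main technical obstacle''. Neither of your candidates gives it for free.

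$\partial S$ does not sit inside the horofunction compactification. Over a single Gromov point there can be many horofunctions (differing by bounded functions), so the attractor of $h$ is a whole fibre. The inclusion $h^n(X\setminus U)\subset V$ is then only available when $V$ contains that entire fibre, and nothing in your outline produces, inside an arbitrary small open $V$, a loxodromic whose attracting fibre lies in $V$. Also, $S$ is typically not locally finite, so limits of boundary points can be ``finite'' horofunctions. For example, in a locally infinite tree, Busemann functions of ends leaving a vertex $o$ through distinct edges converge pointwise to $d(\cdot,o)$. You would have to show that these points are attracted too. The Gelfand-spectrum variant has the same problem: nothing forces the point lying over $h^{+}$ to be unique.

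Your proposed remedy points the wrong way. Knowing that Gromov-product neighbourhoods of $h^{\pm}$ pull back to open sets only gives continuity of a comparison map to $\partial S$. What you need is that these pulled-back sets form a neighbourhood basis of a single point of $X$. Without this, the density of fixed-point pairs, extreme proximality, and the later step ``$g$ fixes $h^{\pm}\in X$, hence $g\in E(h)$'' are all unsupported.

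The last step of the topological-freeness part is also not an argument as written. You correctly show that the pointwise fixer $\Gamma_W$ of a nonempty open $W$ lies in $E(h_1)\cap E(h_2)$ for independent $h_1,h_2$, and is therefore finite. But nothing justifies the claim that the normal closure of these groups stays inside $K(\Gamma)$; normal closures of finite subgroups are usually infinite.

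The missing step is short:
\begin{enumerate}
\item The kernel of $\Gamma\acts X$ is normal and contained in the finite group $\Gamma_W$, so it lies in $K(\Gamma)=\{1\}$.
\item Suppose some $g\in\Gamma_W$ is nontrivial. For any nonempty open $V$, extreme proximality gives $k$ with $k(X\setminus V)\subset W$. Then $k^{-1}gk$ is nontrivial and fixes $X\setminus V$ pointwise.
\item Choose disjoint nonempty open sets $W'$ and $O$, and infinitely many pairwise disjoint nonempty open $V_i\subset O$; this is possible because an infinite minimal $X$ is perfect. Let $b_i$ be the element from the previous step for $V=V_i$.
\item Each $b_i$ fixes $X\setminus V_i\supset W'$ pointwise, so all $b_i$ lie in $\Gamma_{W'}$.
\item The $b_i$ are pairwise distinct: $b_i$ moves a point of $V_i$, while $b_j$ fixes $V_i$ pointwise for $j\neq i$. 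This contradicts the finiteness of $\Gamma_{W'}$.
\end{enumerate}
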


\subsection{Strong convergence of indexed families}

To prove Lemma \ref{thm:strong}, 
we rely on a result of Gao and Kunnawalkam Elayavalli \cite[Corollary 1.2]{GE26}. 
This result generalizes \cite[Corollary 4.3]{Pis16}, 
which in turn extends \cite[Theorem 3.1]{Sko15} 
(see the appendix of \cite{Mal12}). 
We note that our application of \cite[Corollary 1.2]{GE26} is inspired by the use of \cite[Corollary 4.3]{Pis16} in the proof of \cite[Theorem 1.9]{Rob25}.

Let us first recall the notion of strong convergence of indexed families.
We refer the reader to \cite{GE26} for a proper treatment.
Let $\Lambda$ be a directed set.
Suppose for each $\lambda\in\Lambda\cup\{\infty\}$, 
we have an inclusion of $C^{*}$-algebras 
$B^{(\lambda)}\subset A^{(\lambda)}$ admitting a nondegenerate conditional expectation $E^{(\lambda)}: A^{(\lambda)}\to B^{(\lambda)}$. 
Let $(a_{i}^{(\lambda)})_{i\in I}$ and $(a_{i}^{(\infty)})_{i\in I}$ 
be indexed families of elements in $C^{*}$-probability spaces
$(A^{(\lambda)}, E^{(\lambda)})$ and 
$(A^{(\infty)}, E^{(\infty)})$, respectively.
Assume further that $(a_{i}^{(\infty)})_{i\in I}$ generates $A^{(\infty)}$.
Let us say that $(a_{i}^{(\lambda)})_{i\in I}$ converges strongly to $(a_{i}^{(\infty)})_{i\in I}$, denoted
\[
\{ a_{i}^{(\lambda)} : i\in I\}\xlongrightarrow{s} \{ a_{i}^{\left(\infty\right)} : i\in I\},
\]
if for each cofinal ultrafilter $\cU$ on $\Lambda$, there exists an embedding of $C^{*}$-algebras
\[
\pi: A^{(\infty)}\to \prod_{\cU}A^{\left(\lambda\right)}
\]
such that
\begin{enumerate}
\item $\pi(a_{i}^{(\infty)})=[a_{i}^{(\lambda)}]_{\cU}$ for all $i\in I$; and
\item for any $a\in A^{(\infty)}$, one has $\pi(E^{(\infty)}(a))=[E^{(\lambda)}(a_{\lambda})]_{\cU}$, where $\pi(a)=[a_{\lambda}]_{\cU}$.
\end{enumerate}

We now clarify the precise definition of strong convergence for families of inclusions:

\begin{defn}
Fix an index set $J$ and an index set $I_j$ for each $j \in J$. 
Let $B^{\left(\lambda\right)} \subset A_{j}^{\left(\lambda\right)}$ be an inclusion of 
$C^*$-algebras with a nondegenerate conditional expectation $E_{j}^{\left(\lambda\right)}$; 
and let $(X_{i, j}^{\left(\lambda\right)})_{i \in I_j}$ be a family in $A_{j}^{\left(\lambda\right)}$, 
for each $j \in J$ and $\lambda \in \Lambda \cup \{\infty\}$. 
Assume further that $A_j\si$ is generated by $(X_{i, j}\si)_{i \in I_j}$ for every $j \in J$. 
We say that $(X_{i, j}^{\left(\lambda\right)}; E_{j}^{\left(\lambda\right)})_{i \in I_j} \to (X_{i, j}\si; E_j\si)_{i \in I_j}$ \emph{strongly for all $j \in J$, consistently on $B^{\left(\lambda\right)}$} 

if, for every cofinal ultrafilter $\cU$ on $\Lambda$, there exists an embedding
$\pi_j:A_j^{\left(\infty\right)}\to\prod_{\cU}A_j^{\left(\lambda\right)}$ for each $j\in J$ such that
\begin{enumerate}
    \item $\pi_j(X_{i,j}^{\left(\infty\right)})=[X_{i,j}^{\left(\lambda\right)}]_{\cU}$ for every $i\in I_j$ and $j\in J$;
    \item for $a\in A_j^{\left(\infty\right)}$ one has 
    $\pi_j(E_j^{\left(\infty\right)}(a))=[E_j^{(\lambda)}(a^{\left(\lambda\right)})]_{\cU}$,
    where $\pi_j(a)=[a^{\left(\lambda\right)}]_{\cU}$;
     \item $\pi_{j_1}(d)=\pi_{j_2}(d)$ for every $d\in B^{\left(\infty\right)}$ and every $j_1,j_2\in J$.
\end{enumerate}

\end{defn}

Although \cite{GE26} states the following result only for $\Lambda=\mathbb{N}$, the same conclusion holds for an arbitrary directed set $\Lambda$ by a similar argument.

\begin{thm}\label{thm:strong conve}\cite[Corollary 1.2]{GE26}
Under the same notation and assumptions as above, 
if $(X_{i, j}^{\left(\lambda\right)}; E_j^{\left(\lambda\right)})_{i \in I_j} \to (X_{i, j}\si; E_j\si)_{i \in I_j}$ 
strongly for all 
$j \in J$, consistently on 
$B^{\left(\lambda\right)}$, 
then $(X_{i, j}^{\left(\lambda\right)}; *_{B^{\left(\lambda\right)}}^{j \in J} E_j^{\left(\lambda\right)})_{i \in I_j, j \in J}$ in $*_{B^{\left(\lambda\right)}}^{j\in J} A_{j}^{\left(\lambda\right)}$ 
converges strongly to 
$(X_{i, j}\si; *_{B\si}^{j \in J} E_j\si)_{i \in I_j, j \in J}$ in $*_{B\si}^{j\in J} A_j\si$.
\end{thm}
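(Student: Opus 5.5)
The statement is \cite[Corollary 1.2]{GE26} for an arbitrary directed set $\Lambda$ in place of $\mathbb{N}$. So I would not reprove the free-probabilistic core. Instead I would check that the Gao--Kunnawalkam Elayavalli argument uses the index set only through ultrafilters, or else reduce the general case to the sequential one.

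\textbf{Step 1: reduction to one ultrafilter.} Strong convergence is defined ultrafilter by ultrafilter. So I fix a cofinal ultrafilter $\cU$ on $\Lambda$ and the embeddings $\pi_j:A_j\si\to\prod_{\cU}A_j^{(\lambda)}$ from the hypothesis. Condition (3) says the restrictions $\pi_j|_{B\si}$ all agree; call this common map $\pi_B:B\si\to\prod_{\cU}B^{(\lambda)}$. Put $\mathcal{B}=\prod_{\cU}B^{(\lambda)}$, $\mathcal{A}_j=\prod_{\cU}A_j^{(\lambda)}$ and $\mathcal{E}_j=\prod_{\cU}E_j^{(\lambda)}$. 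Then $\mathcal{E}_j:\mathcal{A}_j\to\mathcal{B}$ is a conditional expectation.

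\textbf{Step 2: the core embedding.} The core step is to produce an embedding
\[
\Pi:{*}_{B\si}^{j\in J}A_j\si\to\prod_{\cU}{*}_{B^{(\lambda)}}^{j\in J}A_j^{(\lambda)}
\]
that extends each $\pi_j$ and intertwines the free product expectations. The algebraic part is routine. There is a canonical map from the algebraic amalgamated free product of the $\mathcal{A}_j$ over $\mathcal{B}$ into $\prod_{\cU}{*}A_j^{(\lambda)}$. On reduced words $a_1\cdots a_n$ with $a_k\in\Ker\mathcal{E}_{j_k}$ and $j_k\neq j_{k+1}$, the product of expectations is computed coordinatewise. Hence this map intertwines ${*}\mathcal{E}_j$ with $\prod_{\cU}({*}E_j^{(\lambda)})$. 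Composing with the $\pi_j$, which intertwine the expectations by condition (2), gives a $*$-homomorphism from the algebraic free product into the ultraproduct that preserves the expectation onto $B$.

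\textbf{Step 3: norm estimates and injectivity.} Two things must be checked.
\begin{itemize}
\item This map is contractive for the reduced norm.
\item It is injective on the reduced amalgamated free product.
\end{itemize}
Injectivity follows once expectations are preserved, since the free product expectation is nondegenerate on the reduced free product. The upper norm bound is the genuinely nontrivial content of \cite{GE26}. It is established through GNS Hilbert module estimates that are uniform in $\lambda$: Haagerup/Pisier-type estimates on the amalgamated free product module.

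\textbf{Step 4: replacing $\mathbb{N}$ by $\Lambda$.} I expect this step, checking that the argument for $\Lambda=\mathbb{N}$ transfers verbatim, to be the main obstacle. My first approach is to observe that the proof in \cite{GE26} only uses the ultraproduct structure. It uses no diagonal or sequence-extraction argument. It works with a fixed free ultrafilter on $\mathbb{N}$ and uses only the $C^*$-algebraic properties of the ultraproduct. Cofinality of $\cU$ is exactly what makes $\prod_{\cU}$ behave like an ultraproduct along a free ultrafilter, so the estimates go through unchanged.

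As a fallback, assuming $J$ and the $I_j$ are countable and the algebras separable, I would pass to a sequence. The approach is to approximate, for each finite set of words and each tolerance, by a $\cU$-large set of $\lambda$, and then extract a sequence $\lambda_n$ realizing $\cU$-limits on a countable dense set. This reduces to the $\mathbb{N}$ case, after which a standard reindexing argument returns the embedding into $\prod_{\cU}$.
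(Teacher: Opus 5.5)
Your proposal is correct and takes essentially the paper's approach: the paper gives no separate proof, but simply invokes Gao--Kunnawalkam Elayavalli's Corollary 1.2 and asserts that the same argument works for any cofinal ultrafilter $\cU$ on an arbitrary directed set $\Lambda$. Your Steps 1--3 correctly identify the reduced-norm upper bound as the only nontrivial input, with injectivity following from nondegeneracy of the free product expectation exactly as in the proof of Lemma~\ref{thm:ToeplitzIsom}; if you use the sequential fallback, choose $(\lambda_n)$ so that it also approximates the $\cU$-limits of the free-product norms of countably many polynomials in the generators, since only then does the $\mathbb{N}$-case conclusion transfer back to $\cU$.
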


\section{Main Result}
In this section, we prove the main result. We begin with a few observations.
\begin{lem}\label{thm:group}
Let $(A,\phi)$ be a separable $C^{*}$-probability space, 
and let $\Gamma$ be a countable discrete group 
which has a topologically free, second countable extreme boundary $X$. If
$\sigma:\Gamma\curvearrowright A$ is an almost periodic action, then
there exists an axial sequence $(z_n)_{n\in\mathbb N}\subset\Gamma$
such that
\[
\sigma_{z_n}\longrightarrow \mathrm{id}.
\]
\end{lem}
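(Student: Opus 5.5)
The plan is to pass to the compact group $K:=\overline{\sigma(\Gamma)}\subset\Aut(A)$ and combine the dynamics of the boundary with a recurrence argument in $K$. Since $A$ is separable, the point-norm topology on $\Aut(A)$ is metrizable: fix a dense sequence $(a_k)$ in the unit ball and put $d(\alpha,\beta)=\sum_k 2^{-k}\bigl(\|\alpha(a_k)-\beta(a_k)\|+\|\alpha^{-1}(a_k)-\beta^{-1}(a_k)\|\bigr)$. Then $K$ is a compact metrizable group, and $\sigma_{z_n}\to\id$ means exactly $d(\sigma_{z_n},\id)\to0$.

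First I would produce one axial sequence $(g_n)$. Because $X$ is second countable, topologically free and extremely proximal (and $|X|>2$, which I expect follows from $\Gamma$ being nontrivial and topologically free; the degenerate case should be routine), the remark after the definition of axial sequences supplies an axial sequence $(g_n)$ with attracting and repelling points $z_+\neq z_-$ satisfying condition (2).

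The main step is to correct $(g_n)$ so that its image in $K$ tends to the identity, while keeping the same fixed points $z_\pm$; this way condition (2) stays automatic and only condition (1) has to be re-checked. By compactness, after passing to a subsequence I may assume $\sigma_{g_n}\to\kappa\in K$. My first attempt is to use differences $z_n:=g_{m(n)}g_{n}^{-1}$, or better $g_{m(n)}^{-1}g_n$ arranged to have the correct orientation, where $m(n)$ is chosen so that both are close to $\kappa$ in $K$; then $d(\sigma_{z_n},\id)\to0$ by continuity of multiplication in $K$. I would then need to verify condition (1). Take neighborhoods $U_\pm$ of $z_\pm$. For $n\gg m$ one has $g_n(X\setminus U_-)\subset U'_+$ for some small neighborhood $U'_+$ of $z_+$, and I want a second factor that maps $X\setminus U_-$ into $U_+$ without spoiling this. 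Plain differences $g_mg_n^{-1}$ do not obviously have north–south dynamics, so the safer choice is products $z_n:=g_{k_n}g_{l_n}$ with $k_n,l_n\to\infty$, chosen so that $\sigma_{g_{k_n}}\sigma_{g_{l_n}}\to\id$. Such a choice requires $\kappa$ to be approximable in the other direction as well.

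To get this, I would use the fact that in a compact metrizable group every element is recurrent: for each $\varepsilon>0$ there are arbitrarily large $p$ with $d(\kappa^p,\id)<\varepsilon$ (the closure of $\{\kappa^p\}$ is a compact monothetic group). Replacing $g_n$ by powers $g_n^{p}$ for suitable large $p=p(n)$ keeps north–south dynamics with the same $z_\pm$: a single $g_n$ with $n$ large sends $X\setminus U_-$ into $U_+\subset X\setminus U_-$ once the neighborhoods are disjoint, so every power does too. Concretely, I would fix disjoint $U_\pm$, take $n$ large with $d(\sigma_{g_n},\kappa)$ tiny, and then choose $p$ with $\sigma_{g_n}^{p}$ close to $\id$. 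The $p$-th power is controlled because the closure of $\{\sigma_{g_n}^{q}\}_q$ is compact, so recurrence applies directly to $\sigma_{g_n}$ itself, with no need to pass through $\kappa$. I would then set $z_j:=g_{n_j}^{p_j}$ along a diagonal sequence with $n_j\to\infty$ and $d(\sigma_{g_{n_j}}^{p_j},\id)<1/j$. Condition (1) follows because $z_j(X\setminus U_-)\subset g_{n_j}(X\setminus U_-)$, and the latter lies in $U_+$ for large $j$, for arbitrarily small $U_\pm$. This power trick is the step I expect to be the crux: recurrence must be applied to each $\sigma_{g_n}$ individually, and the inclusion $g(X\setminus U_-)\subset U_+$ must be checked to iterate.
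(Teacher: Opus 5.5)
Your argument is correct. Only the power trick in your last paragraph is needed; the attempts with $g_mg_n^{-1}$ and $g_kg_l$ can be dropped. It is a genuinely different mechanism from the paper's.

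\textbf{The paper's route.} It passes to a subsequence with $\sigma_{w_n}\to\alpha$ and first arranges $w_n^2\neq 1$. It then uses condition (2) to choose shrinking neighborhoods $U_{n,\pm}$ of $z_\pm$ with $w_n^{-2}U_{n,+}\cap U_{n,-}=\emptyset$. Finally it takes the sandwich $z_n=w_{k_n}w_n^{-2}w_{k_n}$ with $k_n$ large. The outer factors supply the north--south dynamics. The middle factor is harmless because it maps $U_{n,+}$ into $X\setminus U_{n,-}$. And $\sigma_{z_n}\to\alpha\alpha^{-2}\alpha=\mathrm{id}$.

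\textbf{Your route.} You apply recurrence to each single $\sigma_{g_n}$ inside the compact monothetic group $\overline{\{\sigma_{g_n}^q\}_q}$. You combine this with the observation that, for disjoint $U_\pm$, the inclusion $g(X\setminus U_-)\subset U_+$ propagates to $g^p(X\setminus U_-)\subset g(X\setminus U_-)\subset U_+$ for every $p\ge 1$.

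\textbf{Comparison.} Your route needs no global limit $\alpha$, no discussion of involutions, and no neighborhoods adapted to $w_n^{-2}$. Condition (2) is inherited automatically because $z_\pm$ and the boundary action are unchanged. So for the purpose of the next lemma, where only axiality and $\sigma_{z_n}\to\mathrm{id}$ are used, yours is the more economical proof. The paper's version extracts one convergent subsequence and produces $z_n$ as short words in the original axial sequence, rather than powers $g_{n_j}^{p_j}$ of uncontrolled size. That feature is not used later.

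\textbf{Write-up details.} Given arbitrary $U_\pm$, first pass to disjoint $U'_\pm\subset U_\pm$ before iterating, and note that $p_j\ge 1$ is all you need from recurrence.

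\textbf{A small correction.} Your aside that $|X|>2$ follows from $\Gamma$ being nontrivial and topologically free is false. $\mathbb{Z}/2$ flipping two points is a topologically free extreme boundary that admits no axial sequence. The paper's proof makes the same tacit assumption when it starts from an axial sequence, so this is an issue with the lemma's hypotheses, not with your argument.
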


\begin{proof}
Let $(w_{n})_{n}\subset\Gamma$ be an axial sequence with 
distinct points $z_{+}, z_{-}\in X$. 
Since $\sigma: \Gamma\curvearrowright A$ is almost periodic, 
by passing to a subsequence if necessary, 
we may assume that $\sigma_{w_{n}} \to \alpha$ for some $\alpha\in \operatorname{Aut}(A)$. 
Let $(V_{n,+})_{n}$ and $(V_{n,-})_{n}$ be decreasing neighborhood bases of 
$z_+$ and $z_-$, respectively, such that $V_{n, +}\cap V_{n,-}=\emptyset$ for every $n$.

Suppose that there exists an axial sequence $(x_{n})_{n} \subset \Gamma$ 
with distinct points $x_{+}, x_{-} \in X$ 
such that $x_{n}^{2}=1$ for each $n$.
Since the action $\Gamma \curvearrowright X$
 is topologically free, there exists a point $x_{0} \in X \setminus \{x_{+}, x_{-}\}$. 
Let $W_{+}$ and $W_{-}$ 
be disjoint open neighborhoods of 
$x_{+}$ and $x_{-}$ in $X \setminus \{x_{0}\}$, respectively. 
Then, for all sufficiently large $n$, we have
\[
    X \setminus W_{+} = x_{n}^{2}(X \setminus W_{+}) \subset x_{n}W_{-} \subset x_{n}(X \setminus W_{+}) \subset W_{-}.
\]
Hence, we obtain $x_{0} \in W_{-}$, which is a contradiction. 

Therefore, without loss of generality, we may assume that $w_{n}^{2}\neq 1$ for every $n\in \mathbb{N}$. Since
\[
g\{z_{+}, z_{-}\}\cap\{z_{+}, z_{-}\}\neq\emptyset
\] 
implies $g=1$, 
we can choose decreasing sequences of open neighborhoods
\[
(U_{n,+})_{n}, (U_{n,-})_{n}
\]
of $z_{+}$ and $z_{-}$, respectively, such that 
$U_{n,\pm}\subset V_{n,\pm}$
and 
 $w_{n}^{-2}U_{n,+}\cap U_{n,-}=\emptyset$.
 For each $n$, choose $k_{n}$ 
 so that 
 \[
 w_{k_{n}}(X\setminus U_{n,-})\subset U_{n,+},
 \]
 and define
 $z_{n}=w_{k_{n}}w_{n}^{-2}w_{k_{n}}$. 
 Since 
 \[
 w_{n}^{-2}U_{n,+}\cap U_{n, -}=\emptyset,
 \] 
 we have $z_{n}(X\setminus U_{n,-})
 \subset w_{k_{n}}w_{n}^{-2}U_{n,+}
 \subset w_{k_{n}}(X\setminus U_{n,-})
 \subset U_{n,+}$.
 Finally, let $W_{+}$ and $W_{-}$ be arbitrary open neighborhoods of $z_{+}$ and $z_{-}$ respectively.
 Since $U_{n, \pm}\subset W_{\pm}$ eventually, it follows that 
 \[
 z_{n}(X\setminus W_{-})
 \subset z_{n}(X\setminus U_{n, -})
 \subset U_{n,+}
 \subset W_{+}
 \]
  eventually. 
 \end{proof}

The following lemma generalizes \cite[Theorem 1]{Ozw25}. 
To prove this lemma, we adapt Ozawa's ``tree-graded'' space method.
\begin{lem}\label{thm:exZ}
Let $(A,\phi)$ be a separable $C^{*}$-probability space, and let $\Gamma$ be a countable discrete group which has a topologically free, second countable extreme boundary. 
If $\sigma:\Gamma\curvearrowright A$ is an almost
periodic action, then the inclusion
\[
A\rtimes_{\sigma, r}\Gamma \subset A\rtimes_{\sigma*\id, r}(\Gamma * \mathbb{Z})
\]
is existential.

\end{lem}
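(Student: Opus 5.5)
We will construct the embedding $\pi\colon A\rtimes_{\sigma*\id,r}(\Gamma*\bZ)\to (A\rtimes_{\sigma,r}\Gamma)^{\cU}$ directly, with $\cU$ a free ultrafilter on $\bN$. It should be the identity on $A\rtimes_{\sigma,r}\Gamma$, send the generator $u$ of $\bZ$ to $[\lambda_{z_n}]_{\cU}$, and intertwine the canonical traces/expectations $\phi\circ E_A$. Here $(z_n)$ is the axial sequence provided by Lemma \ref{thm:group}, so $\sigma_{z_n}\to\id$ in the point-norm topology. This choice forces $\lambda_{z_n} a\lambda_{z_n}^{*}=\sigma_{z_n}(a)\to a$ in norm for every $a\in A$. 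Hence, in the ultrapower, $[\lambda_{z_n}]_{\cU}$ commutes with the diagonal copy of $A$, exactly as $\lambda_u$ does in $A\rtimes_{\sigma*\id}(\Gamma*\bZ)$ (there $u$ acts trivially). The elements $\iota(A)$, $\iota(\lambda_g)$ for $g\in\Gamma$, and $v=[\lambda_{z_n}]_\cU$ therefore satisfy the covariance relations of $\sigma*\id$. By the universal property we get a $*$-homomorphism from the full crossed product $A\rtimes_{\sigma*\id}(\Gamma*\bZ)$ onto the $C^*$-algebra they generate.

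The key step is to show that this homomorphism is trace-preserving: $\phi^{\cU}\circ E^{\cU}$ applied to the image of $a\lambda_w$ should vanish for every nontrivial reduced word $w=g_0u^{m_1}g_1\cdots u^{m_k}g_k$ with some $m_i\neq0$. Using the commutation of $A$ with $v$ up to $o(1)$, we move all coefficients to the left. It then suffices to prove that the group element $g_0z_n^{m_1}g_1\cdots z_n^{m_k}g_k$ is nontrivial for $\cU$-almost every $n$. We then conclude $E_A(\cdot)=0$ exactly, up to the vanishing error. This is Ozawa's ping-pong argument, adapted as follows. Fix disjoint neighbourhoods $U_\pm$ of $z_\pm$. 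Using freeness of $\Gamma$ on $\{z_+,z_-\}$, we shrink them so that $g_i U_\epsilon\cap U_{\epsilon'}=\emptyset$ for each of the finitely many middle letters $g_i\neq1$ and all signs. Axiality then gives $z_n^{\pm m}(X\setminus U_\mp)\subset U_\pm$ for large $n$. We pick a point $x$ outside the relevant neighbourhoods and outside $g_k^{-1}(U_+\cup U_-)$; this is possible since $|X|>2$ and $X$ is perfect by minimality and topological freeness. The point $x$ is then moved by the word into some $g_0U_\pm$, which excludes $x$. Hence the word acts nontrivially on $X$, so it is $\neq1$. Since the letters $g_i$ are fixed while $n\to\infty$, this works for each word separately, which is enough in the ultralimit.

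Trace preservation together with faithfulness finishes the argument. The composite with the canonical trace on the ultrapower equals $\phi\circ E_A$ on the full crossed product, and the GNS representation of $\phi\circ E_A$ is the regular one, because $\phi$ is GNS-faithful. So the homomorphism factors through $A\rtimes_{\sigma*\id,r}(\Gamma*\bZ)$ and is injective there. The same bookkeeping, applied with $E_A$ instead of $\phi\circ E_A$ and using $E_A^{\cU}$ on $(A\rtimes_r\Gamma)^{\cU}$, shows the expectations are intertwined. This gives the existential embedding with $\pi|_{A\rtimes_r\Gamma}=\iota$.

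I expect the main obstacle to be the error control when coefficients from $A$ sit between powers of $v$. The ping-pong only handles pure group words, and commuting $a\in A$ past $\lambda_{z_n}^{m}$ introduces $\sigma_{z_n^{m}}(a)-a$. This term is small for fixed $m$ because $\sigma_{z_n}\to\id$, so the estimate is uniform over each finite word. Approximation by finite sums then extends it to the whole algebra.
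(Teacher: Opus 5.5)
\textbf{There is a genuine gap} at the step ``trace preservation together with faithfulness finishes the argument.'' Your covariant pair is the same as the paper's, and your ping-pong argument is correct. Write $\pi$ for your homomorphism on the full crossed product and $\psi=\phi^{\cU}\circ E_A^{\cU}$ for the state on $(A\rtimes_{\sigma,r}\Gamma)^{\cU}$.

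From $\psi\circ\pi=\phi\circ E_A$ you only get that $\pi(b)=0$ forces $b$ to vanish in the GNS representation of $\phi\circ E_A$. That is, $\ker\pi$ is contained in the kernel of the quotient map onto $A\rtimes_{\sigma*\id,r}(\Gamma*\bZ)$, or equivalently $\|x\|_r\le\|\pi(x)\|$. This is the wrong direction. Factoring through the reduced crossed product needs the reverse bound
\[
\lim_{\cU}\Bigl\|\sum_{w}a_w\lambda_{w(z_n)}\Bigr\|_{A\rtimes_{\sigma,r}\Gamma}\le\Bigl\|\sum_{w}a_w\lambda_w\Bigr\|_{A\rtimes_{\sigma*\id,r}(\Gamma*\bZ)},
\]
where $w(z_n)\in\Gamma$ is obtained by substituting $z_n$ for the generator $u$. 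You could only deduce this if $\psi$ were GNS-faithful on the image of $\pi$, which is exactly what has to be proved. On the norm ultrapower $\psi$ has a large kernel, so this is not available for free.

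Put differently, ping-pong gives convergence of all moments (convergence in distribution), and that does not imply convergence of norms. For example, finite groups with marked generating sets of growing girth converge in distribution to a free group, yet the norm of the sum of the generators in their regular representations stays at the full degree. So the norm estimate, which is the real content of the lemma, is missing. The obstacle you flagged, coefficients sitting between powers of $v$, does not even arise, since the algebraic crossed product is spanned by the elements $a\lambda_w$.

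\textbf{How the paper gets the norm bound.} It uses Ozawa's tree-graded method. Because $\ell^\infty(\Gamma)^{\cU}$ commutes with the image of $A$, Ozawa's construction yields a $(\Gamma*\bZ)$-equivariant representation of $C(\overline{T})$ commuting with the image of $A$. Then \cite[Theorem 4]{Ozw25} upgrades the automatic continuity on $A\rtimes_{\sigma,r}\Gamma$ (there the map is just the diagonal embedding) to continuity on $A\rtimes_{\sigma*\id,r}(\Gamma*\bZ)$.

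Your state-preservation computation is still worth keeping once continuity is established. It extends by continuity to the reduced crossed product. Since $\phi\circ E_A$ is GNS-faithful there, it then gives injectivity and state preservation of the embedding, points the paper's proof leaves implicit. On its own, though, it cannot replace the norm bound.
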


\begin{proof}
Since $\sigma:\Gamma\curvearrowright A$ is almost periodic, one has an axial sequence $\left(z_{n}\right)_n\subset\Gamma$ 
satisfying
$\sigma_{z_n}\to\id$ by Lemma \ref{thm:group}.

Let $(\pi_{\phi}, \cH_{\phi})$ be the GNS representation associated with $(A, \phi)$, 
and let $\cU$ be a free ultrafilter on $\bN$. 
We define a $*$-representation
\[
\tilde{\pi}_{\phi}\colon A\to(A \rtimes_r \Gamma)^{\mathcal{U}} \subset \mathcal{B}(\cH_{\phi} \otimes \ell^2(\Gamma))^{\mathcal{U}}
\]
by $\tilde{\pi}_\phi(a)=[\pi_{\phi}(a)\otimes 1]_{\cU}$
and a unitary representation
\[
    \tilde{\lambda} \colon \Gamma \ast \langle z \rangle \to 
    (A \rtimes_r \Gamma)^{\mathcal{U}} \subset \mathcal{B}(\cH_{\phi} \otimes \ell^2(\Gamma))^{\mathcal{U}}
\]
by $\tilde{\lambda}_{g} = \lambda_g$ for all $g \in \Gamma$, and $\tilde{\lambda}_{z} = [\lambda_{z_n}]_\cU$.
Viewing $(A\rtimes_r\Gamma)^{\mathcal U}$ as acting on a Hilbert space $\mathcal H$ via a faithful representation, $(\tilde{\pi}_\phi,\tilde{\lambda})$ is a covariant representation of $(A,\Gamma*\mathbb Z,\sigma*\id)$, since
\[
[\lambda_{z_n}]_\cU\tilde{\pi}_\phi(a)[\lambda_{z_n^{-1}}]_\cU
=
[\lambda_{z_n}\tilde{\pi}_\phi(a)\lambda_{z_n^{-1}}]_\cU
=
[\tilde{\pi}_\phi(\sigma_{z_n}(a))]_\cU
=
\tilde{\pi}_\phi(a).
\]
Hence the integrated form
\[
\tilde{\pi}_\phi\rtimes\tilde{\lambda}:
A\rtimes_{\sigma*\id}(\Gamma*\mathbb Z)\to\mathcal B(\mathcal H)
\]
is a well-defined $*$-homomorphism. 

Since $\Gamma$ admits a topologically free extreme boundary, one can construct a tree $T$ and a $\Gamma$-action on it as in \cite{Ozw25}.
Since
$\ell^\infty(\Gamma)^{\mathcal U}\subset(\tilde{\pi}_\phi\rtimes\tilde{\lambda})(A)'$,
the same argument as in the proof of \cite[Theorem 1]{Ozw25} yields a
 $\left(\Gamma*\bZ\right)$-equivariant representation of $C(\overline T)$ on $\mathcal H$ such that
$C(\overline T)\subset(\tilde{\pi}_\phi\rtimes\tilde{\lambda})(A)'$,
where $\overline{T}\coloneqq T\cup\partial T$ is the compactification of $T$.
Therefore, by \cite[Theorem 4]{Ozw25} and the continuity of
$(\tilde{\pi}_\phi\rtimes\tilde{\lambda})|_{A\rtimes_{\sigma, r}\Gamma}$,
the representation $\tilde{\pi}_\phi\rtimes\tilde{\lambda}$ is continuous on
$A\rtimes_{\sigma*\id, r}(\Gamma*\mathbb Z)$.
Since
\[
(\tilde{\pi}_\phi\rtimes\tilde{\lambda})
\bigl(A\rtimes_{\sigma*\id, r}(\Gamma*\mathbb Z)\bigr)
\subset
(A\rtimes_{\sigma,r}\Gamma)^{\mathcal U},
\]
the inclusion
\[
A\rtimes_{\sigma, r}\Gamma\subset A\rtimes_{\sigma*\id ,r}(\Gamma*\mathbb Z)
\]
is existential.
\end{proof}

If a countable discrete group $\Gamma$ admits 
a topologically free extreme boundary, 
then by \cite[Corollary 2.2]{MO15}, $\Gamma*\mathbb{Z}$ is 
a non-elementary acylindrically hyperbolic group containing no nontrivial finite normal subgroup. 
Hence $\Gamma*\bZ$ admits an extreme boundary 
by Theorem \ref{thm:boundary}.
By applying Lemma~\ref{thm:exZ} twice 
and recalling that any countable discrete group admitting a topologically free extreme boundary also admits a second countable one, we obtain the following:
\begin{cor}\label{thm:existential}
Let $(A,\phi)$ be a separable $C^{*}$-probability space, 
and let $\Gamma$ be a countable discrete group which has a topologically free extreme boundary. 
If $\sigma:\Gamma\curvearrowright A$ is an almost
periodic action, then the inclusion
\[
A\rtimes_{\sigma,r}\Gamma \subset A\rtimes_{\sigma*\id,r}(\Gamma * \bF_{2})
\]
is existential.
\end{cor}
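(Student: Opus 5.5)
The plan is to apply Lemma~\ref{thm:exZ} twice and compose the resulting existential inclusions, using $\bF_2\cong\bZ*\bZ$. We identify $\Gamma*\bF_2$ with $(\Gamma*\bZ)*\bZ$. Under this identification the action $\sigma*\id$ of $\Gamma*\bF_2$ on $A$ becomes $(\sigma*\id)*\id$. Hence we have the chain
\[
A\rtimes_{\sigma,r}\Gamma\subset A\rtimes_{\sigma*\id,r}(\Gamma*\bZ)\subset A\rtimes_{(\sigma*\id)*\id,r}((\Gamma*\bZ)*\bZ)=A\rtimes_{\sigma*\id,r}(\Gamma*\bF_2).
\]
The inclusions are the canonical ones. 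They are compatible with the canonical conditional expectations onto $A$, so the reduced crossed products embed as expected.

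\textbf{First inclusion.} Since $\Gamma$ has a topologically free extreme boundary, it has a second countable one. The first inclusion is then existential by Lemma~\ref{thm:exZ} applied directly to $\sigma$.

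\textbf{Second inclusion.} We apply Lemma~\ref{thm:exZ} again, now to the group $\Gamma*\bZ$ with the action $\sigma*\id$. Two hypotheses must be checked.

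\emph{Boundary of $\Gamma*\bZ$.} By \cite[Corollary 2.2]{MO15}, $\Gamma*\bZ$ is a non-elementary acylindrically hyperbolic group with no nontrivial finite normal subgroup. Theorem~\ref{thm:boundary} therefore gives a topologically free extreme boundary on a compact metrizable space, hence a second countable one.

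\emph{Almost periodicity of $\sigma*\id$.} The image of $\Gamma*\bZ$ in $\Aut(A)$ under $\sigma*\id$ equals $\sigma(\Gamma)$, because the generator of $\bZ$ acts trivially. Its point-norm closure is therefore the compact set $\overline{\sigma(\Gamma)}$. This is the step most likely to hide a subtlety, but it is immediate from the definition.

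\textbf{Composition.} Composition of existential inclusions is existential. Given $A\subset B\subset C$ with $B\to A^{\cU}$ and $C\to B^{\cV}$ restricting to the diagonals, we compose $C\to B^{\cV}\to (A^{\cU})^{\cV}$. The latter is an ultrapower of $A$ with respect to the product ultrafilter, and the composite restricts to the diagonal on $A$. Alternatively, by separability, one can use the standard characterization of existentiality via approximate solutions of finitely many formulas. Both inclusions in the chain are existential, so the inclusion $A\rtimes_{\sigma,r}\Gamma\subset A\rtimes_{\sigma*\id,r}(\Gamma*\bF_2)$ is existential.
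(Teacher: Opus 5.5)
Your proposal is correct and follows the paper's argument. The paper also gets the corollary by applying Lemma~\ref{thm:exZ} twice, using \cite[Corollary 2.2]{MO15} together with Theorem~\ref{thm:boundary} to give $\Gamma*\bZ$ a second countable topologically free extreme boundary. Your checks that $\sigma*\id$ is almost periodic (its image is $\sigma(\Gamma)$) and that existential inclusions compose via $(A^{\cU})^{\mathcal{V}}\cong A^{\mathcal{W}}$ for a product ultrafilter $\mathcal{W}$ just spell out steps the paper leaves implicit.
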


We denote by $(\cT, \omega)$ the Toeplitz $C^{*}$-probability space, 
where $\cT$ is the Toeplitz algebra generated by the unilateral shift $T$ on $\ell^{2}(\bN)$ 
and $\omega$ is the vacuum state associated with the vector $\delta_{0}$.

Let $(A, \phi)$ be a $C^{*}$-probability space. 
The universal $C^{*}$-algebra generated by $A$ and an isometry $T$ 
satisfying $T^{*}aT=\phi(a)$ for all $a \in A$ is the Toeplitz--Pimsner algebra $\cT_{A,\phi}$ over the Hilbert $A$-module $\overline{\operatorname{span}}(ATA)$. 
We note that $(\cT_{A, \phi}, \phi\circ E_{A}^{\phi})\simeq (A, \phi)*(\cT, \omega)$, 
where $E_{A}^{\phi}$ is the canonical nondegenerate conditional expectation from $\cT_{A,\phi}$ onto $A$. 
We refer the reader to Section 4.6 of \cite{BO08} for a detailed treatment.

\begin{lem}\label{thm:ToeplitzIsom}
Let $(A,\phi)$ be a $C^{*}$-probability space 
and let $\Gamma$ be a countable discrete group. 
Then we have the following isomorphism:
\[
(\cT_{A\rtimes_{r}\Gamma, \phi\circ E_{A}}, E_{A}\circ E_{A\rtimes_{r}\Gamma}^{\phi\circ E_{A}})\simeq (\cT_{A,\phi}, E_{A}^{\phi})*_{A}(A\rtimes_{r}\Gamma, E_{A}).
\]

\end{lem}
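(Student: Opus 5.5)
We will build mutually inverse $*$-homomorphisms that intertwine the two expectations onto $A$. Write $B=A\rtimes_r\Gamma$ and $\psi=\phi\circ E_A$. Let $\cT_{B,\psi}$ be generated by $B$ and an isometry $S$ with $S^*bS=\psi(b)$. Let $\cT_{A,\phi}$ be generated by $A$ and an isometry $T$ with $T^*aT=\phi(a)$. Put $M=(\cT_{A,\phi},E_A^\phi)*_A(B,E_A)$, with free product expectation $E$ onto $A$.

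\textbf{Map from $\cT_{B,\psi}$ to $M$.} In $M$ the elements of $B$ and the isometry $T$ both sit inside. For $b\in B$ I will check that $T^*bT=\psi(b)$. Write $b=E_A(b)+b^\circ$ with $b^\circ\in\ker E_A$. The term $T^*E_A(b)T$ equals $\phi(E_A(b))=\psi(b)$. For the other term, $T^*b^\circ T$ is a reduced word whose letters alternate between $\ker E_A^\phi$ and $\ker E_A$: note $T\in\ker E_A^\phi$, since $E_A^\phi(T)=0$ in the Pimsner picture. In the reduced amalgamated free product, the relation $T^*b^\circ T=0$ has to be verified by a computation on the free product module. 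Applying the word to vectors in $L^2(M,E)$, one sees that $T^*xT$ for $x$ in the free complement collapses. Concretely, $T$ maps $\xi$ to $T\otimes\xi$ in the $\cT$-type summand, $b^\circ$ then starts a new summand, and $T^*$ kills anything not beginning with $T$. Hence $T^*b^\circ T=0$. By universality we get $\Phi:\cT_{B,\psi}\to M$ with $S\mapsto T$ and $b\mapsto b$.

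\textbf{Map from $M$ to $\cT_{B,\psi}$.} The copy $A\subset B$ together with $S$ satisfies $S^*aS=\phi(a)$, so universality gives $\cT_{A,\phi}\to\cT_{B,\psi}$. To extend to the reduced amalgamated free product, I will show that $\cT_{A,\phi}$ and $B$ are free with amalgamation over $A$ inside $\cT_{B,\psi}$ with respect to $E_A\circ E_B^{\psi}$. Each of them should be faithfully represented, compatibly with the expectations. Then the universal property of reduced amalgamated free products, via the GNS-faithful construction, gives a map $\Psi:M\to\cT_{B,\psi}$.

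\textbf{Freeness.} Use the Fock space $\mathcal F=B\oplus\bigoplus_{n\ge1}X^{\otimes n}$ with $X=\overline{BSB}\cong B\otimes_\psi B$. The expectation $E_B^\psi$ is compression to the degree-zero part. Elements of $\ker E_A^\phi$ in $\cT_{A,\phi}$ are spanned by words $a_0S^{k}S^{*l}a_1\cdots$ with $(k,l)\ne(0,0)$, plus the $\phi$-centered parts. Elements of $\ker E_A$ in $B$ are spanned by $a\lambda_g$ with $g\neq e$. I would check by induction on length that an alternating product has zero $E_A\circ E_B^\psi$. The key point is that $S^*a\lambda_gS=\phi(E_A(a\lambda_g))=0$ for $g\neq e$, so the $\lambda_g$ letters can never be contracted through $S$. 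Conversely, $A$-elements are contracted exactly through $\phi$, which matches the $\cT_{A,\phi}$ relations. This gives the freeness, and also shows that $\Psi$ intertwines the expectations.

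\textbf{Conclusion and main obstacle.} $\Phi$ and $\Psi$ are inverse on generators. $\Phi$ intertwines $E$ with $E_A\circ E_B^\psi$ on words by the same contraction computation, and both expectations are nondegenerate (GNS-faithful). Hence the isomorphism follows. The main obstacle is the word combinatorics showing that alternating centered words have zero expectation. This must be done carefully with the Fock-space normal form, $S^{*l}$-annihilation and $\phi$-contractions, and it is used both for the relation $T^*b^\circ T=0$ and for the freeness.
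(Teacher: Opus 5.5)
Your proposal is correct and is essentially the paper's argument. $\Phi$ comes from universality once $T^*b^\circ T=0$ is checked: you verify this directly on the free product Hilbert module, while the paper uses nondegeneracy of $E_A^\phi*_AE_A$ applied to $xT^*b^\circ Ty$. Injectivity then follows from $(E_A^\phi*_AE_A)\circ\Phi=E_A\circ E_B^{\psi}$ together with GNS-faithfulness of $E_A\circ E_B^{\psi}$, and your freeness/$\Psi$ step is a repackaging of that same word computation.

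One slip: $\ker E_A^\phi$ meets $A$ trivially, because the centering is over $A$ rather than $\bC$. So the ``$\phi$-centered parts'' should not be included among the centered $\cT_{A,\phi}$-letters.
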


\begin{proof}

We claim that $T^{*}a\lambda_{g}T = \left(\phi\circ E_{A}\right) (a\lambda_{g})$. Since $E_{A}^{\phi}*_{A}E_{A}$ is nondegenerate, it suffices to show that
\[
  (E_{A}^{\phi}*_{A}E_{A})(xT^{*}a\lambda_{g}T y) = 0
\]
for all $x, y \in \cT_{A, \phi}*_A (A\rtimes_{r}\Gamma)$ and $g \neq e$. 
    
Let 
\[
  z = T^{p_{0}}a_1T^{p_1}a_{2}\cdots a_{k}T^{p_{k}}(T^{*})^{q_{0}}b_{1}(T^{*})^{q_{1}}b_{2}(T^{*})^{q_2}\cdots b_{l}(T^{*})^{q_l}c\lambda_{g},
\]
where $g \in \Gamma $. 
Then $z$  falls into one of the following four cases:
\begin{enumerate}
  \item If $(p_0,\cdots p_k, q_{0},\cdots q_{l})= 0$ and $g \neq e$, then $z \in \Ker E_{A}$.
  \item If $(p_0,\cdots p_k, q_{0},\cdots q_{l})= 0$ and $g = e$, then $z \in A$.
  \item If $(p_0,\cdots p_k, q_{0},\cdots q_{l})\neq 0$ and $g \neq e$, then $z \in (\Ker E_{A}^{\phi})(\Ker E_{A})$.
  \item If $(p_0,\cdots p_k, q_{0},\cdots q_{l})\neq 0$ and $g = e$, then $z \in \Ker E_{A}^{\phi}$.
\end{enumerate}

Consequently, we obtain:
\begin{enumerate}
\item If $(p_0,\cdots p_k, q_{0},\cdots q_{l})= 0$ and $g \neq e$, then $zT^{*}\in (\Ker E_{A})(\Ker E_{A}^{\phi}), Tz\in (\Ker E_{A}^{\phi})(\Ker E_{A})$.
\item If $(p_0,\cdots p_k, q_{0},\cdots q_{l})= 0$ and $g = e$, then $zT^{*}\in \Ker E_{A}^{\phi}, Tz\in \Ker E_{A}^{\phi}$.
\item If $(p_0,\cdots p_k, q_{0},\cdots q_{l})\neq 0$ and $g \neq e$, then $zT^{*}\in(\Ker E_{A}^{\phi})(\Ker E_{A})(\Ker E_{A}^{\phi}), Tz\in (\Ker E_{A}^{\phi})(\Ker E_{A})$.
\item If $(p_0,\cdots p_k, q_{0},\cdots q_{l})\neq 0$ and $g = e$, then $zT^{*}\in\Ker E_{A}^{\phi}, Tz\in\Ker E_{A}^{\phi}$
\end{enumerate}

Since $\cT_{A, \phi} *_A (A \rtimes_r \Gamma)$ is densely spanned by elements 
$z_{1}z_{2}\cdots z_{n}$ where each factor is of the form
   \[
   T^{p_{0}}a_1T^{p_1}a_{2}\cdots a_{k}T^{p_{k}}(T^{*})^{q_{0}}b_{1}(T^{*})^{q_{1}}b_{2}(T^{*})^{q_2}\cdots b_{l}(T^{*})^{q_l}c\lambda_{g}
   \]  
where $z_1$ is of the form described above with $g \in \Gamma \setminus \{e\}$; $z_2, \dots, z_{n-1}$ are of the same form with $g \in \Gamma \setminus \{e\}$ 
and $(p_{0}, \dots, p_{k}, q_{0}, \dots, q_{l}) \neq 0$; 
and $z_{n}$ is of the same form with $g \in \Gamma$,
it suffices to show that
\[
    E_{A}^{\phi} \ast_{A} E_{A} (x T^{\ast} a \lambda_{g} T y) = 0,
\]
where $x = x_1 x_2 \dots x_n$ and $y = y_1 y_2 \dots y_m$ in $\cT_{A, \phi} \ast_A (A \rtimes_r \Gamma)$ are both of the above form.

By the above observations, $xT^{*}$ (resp.\ $Ty$) is an alternating product of elements in $\operatorname{ker} E_{A}$ and $\operatorname{ker} E_{A}^{\phi}$, ending (resp.\ beginning) with an element of $\operatorname{ker} E_{A}^{\phi}$.

Hence, we obtain $(E_{A}^{\phi} \ast_A E_{A})(x T^* a \lambda_g T y) = 0$ for all $x, y \in \cT_{A, \phi} \ast_A (A \rtimes_r \Gamma)$. Consequently, it follows that $T^* a \lambda_g T = 0$ for all $g \neq e$.
  
  The universal $C^*$-algebra generated by $A \rtimes_r \Gamma$ and an isometry $T$ satisfying $T^* x T = \phi \circ E_A(x)$ for all $x \in A \rtimes_r \Gamma$ is the Toeplitz--Pimsner algebra $\cT_{A \rtimes_r \Gamma, \phi \circ E_A}$. Hence, we obtain a canonical surjection $\Phi: \cT_{A \rtimes_r \Gamma, \phi \circ E_A} \to \cT_{A, \phi} \ast_A (A \rtimes_r \Gamma)$.

We will show that $\Phi$ is an embedding. First, we observe that
\[
  (E_{A}^{\phi} \ast_A E_{A}) \circ \Phi = E_{A} \circ E_{A \rtimes_r \Gamma}^{\phi \circ E_A}.
\]
Indeed, let
\[
z = T^{p_{0}} a_{1} \lambda_{g_{1}} T^{p_{1}} \dots a_{k} \lambda_{g_{k}} T^{p_{k}} (T^{*})^{q_{0}} b_{1} \lambda_{g_{k+1}} (T^{*})^{q_{1}} \dots b_{l} \lambda_{g_{k+l}} (T^{*})^{q_{l}} \in \cT_{A,\phi} \ast_{A} (A \rtimes_{r} \Gamma),
\]
where $p_i \neq 0$, $q_j \neq 0$ ($1 \leqslant i \leqslant k-1$, $1 \leqslant j \leqslant l-1$), and $(p_{k}, q_{0}) \neq 0$.
Then $z$ falls into one of the following four cases:
\begin{enumerate}
\item If $g_{i}=e$ for all $i$, then $z = T^{p_{0}} a_{1} T^{p_{1}} \dots a_{k} T^{p_{k}} (T^{*})^{q_{0}} b_{1} (T^{*})^{q_{1}} \dots b_{l} (T^{*})^{q_{l}}$.
\item If $i_{0} > 1$ is the first index such that $g_{i_{0}} \neq e$, then $z = x_{1} a_{i_{0}} \lambda_{g_{i_{0}}} y_{1}$, where $x_{1} \in \operatorname{ker} E_{A}^{\phi}$ and $y_1$ is of the same form as $z$.
\item If $g_{1} \neq e$ and $p_{0} \neq 0$, then $z = x_{1} a_{1} \lambda_{g_{1}} y_{1}$, where $x_{1} \in \operatorname{ker} E_{A}^{\phi}$ and $y_1$ is of the same form as $z$, beginning with an element of $\operatorname{ker} E_{A}^{\phi}$.
\item If $g_{1} \neq e$ and $p_{0} = 0$, then $z = a_{1} \lambda_{g_{1}} y_{1}$, where $y_{1}$ is of the same form as $z$, beginning with an element of $\operatorname{ker} E_{A}^{\phi}$.
\end{enumerate}

By induction, $z$ is an alternating product of elements in $\operatorname{ker} E_{A}^{\phi}$ and $\operatorname{ker} E_{A}$. Hence, we have
\[
\begin{aligned}
  &(E_A^\phi \ast_A E_A)
  \Bigl( T^{p_{0}} a_{1} \lambda_{g_{1}} T^{p_{1}} \dots a_{k} \lambda_{g_{k}} T^{p_{k}}
  (T^{*})^{q_{0}} b_{1} \lambda_{h_{1}} (T^{*})^{q_{1}} \dots b_{l} \lambda_{h_{l}} (T^{*})^{q_{l}} \Bigr) \\
  &\quad =
  \begin{cases}
    E_{A}( a_{1} \lambda_{g_{1}} \dots a_{k} \lambda_{g_{k}} b_{1} \lambda_{h_{1}} \dots b_{l} \lambda_{h_{l}} ),
    & \text{if } p_{i} = 0 \text{ and } q_{j} = 0 \text{ for all } i, j, \\[1.5ex]
    0,
    & \text{otherwise}.
  \end{cases}
\end{aligned}
\]
On the other hand, we have
\[
\begin{aligned}
  &(E_{A} \circ E_{A \rtimes_r \Gamma}^{\phi \circ E_A})
  \Bigl( T^{p_{0}} a_{1} \lambda_{g_{1}} T^{p_{1}} \dots a_{k} \lambda_{g_{k}} T^{p_{k}}
  (T^{*})^{q_{0}} b_{1} \lambda_{h_{1}} (T^{*})^{q_{1}} \dots b_{l} \lambda_{h_{l}} (T^{*})^{q_{l}} \Bigr) \\
  &\quad =
  \begin{cases}
    E_{A}( a_{1} \lambda_{g_{1}} \dots a_{k} \lambda_{g_{k}} b_{1} \lambda_{h_{1}} \dots b_{l} \lambda_{h_{l}} ),
    & \text{if } p_{i} = 0 \text{ and } q_{j} = 0 \text{ for all } i, j, \\[1.5ex]
    0,
    & \text{otherwise}.
  \end{cases}
\end{aligned}
\] 
  Let $a \in \ker \Phi$. Since $(E_{A}^{\phi} \ast_A E_{A}) \circ \Phi = E_{A} \circ E_{A \rtimes_r \Gamma}^{\phi \circ E_A}$, we have
\[
  (E_{A} \circ E_{A \rtimes_r \Gamma}^{\phi \circ E_A})(x a y) = 0
\]
for all $x, y \in \cT_{A \rtimes_r \Gamma, \phi \circ E_A}$. In particular, by Kadison's inequality,
\[
0 \leqslant E_{A} \Bigl( E_{A \rtimes_r \Gamma}^{\phi \circ E_A}(x a y)^{*} E_{A \rtimes_r \Gamma}^{\phi \circ E_A}(x a y) \Bigr) \leqslant (E_{A} \circ E_{A \rtimes_r \Gamma}^{\phi \circ E_A}) \Bigl( (x a y)^{*}(x a y) \Bigr) = 0
\]
for all $x, y \in \cT_{A \rtimes_r \Gamma, \phi \circ E_A}$. 
Since $E_{A}$ is faithful, it follows that $E_{A \rtimes_r \Gamma}^{\phi \circ E_A}(x a y) = 0$. By the nondegeneracy of $E_{A \rtimes_r \Gamma}^{\phi \circ E_A}$, we obtain $a = 0$. Thus, $\Phi$ is an embedding.

\end{proof}

Let $(\cC_{1}, \omega) \subset (\cT, \omega)$ denote the $C^{*}$-probability space 
generated by $s \coloneqq (T+T^{*})/2$ in $\cT$.

Let $\cO_{\infty}$ be the Cuntz algebra generated by isometries $\{l_i : i \in \bN\}$ 
with mutually orthogonal ranges, 
and define $s_i \coloneqq (l_i + l_i^*)/2$ for $i \in \bN$. 
We denote by $(\cC, \omega) \subset (\cO_{\infty}, \omega)$ 
the free semicircular $C^*$-probability space generated 
by the family $\{s_i : i \in \bN\}$ in $\cO_{\infty}$, 
where $\omega$ is the vacuum state on $\cO_{\infty}$. 
We note that the reduced free group $C^*$-algebra $C_{r}^*(\bF_{d})$ 
and the free semicircular $C^*$-algebra $\cC$ embed into each other. 
We refer the reader to \cite[Section~4]{Oza}.

\begin{lem}\label{thm:strong}
Let $(A, \phi)$ be a simple, exact, $\cZ$-stable and monotracial $C^*$-probability space, 
and let $\Gamma$ be a countable discrete group. 
Then we have the following inclusion:
\[
  \bigl(A \rtimes_r \Gamma, \, \phi \circ E_{A}\bigr) * (\mathcal{C}_1, \omega) \subset \bigl(\left(A \rtimes_{r}\Gamma\right) *_A (A \otimes\cC), \, \phi \circ (E_{A}*_{A}(\operatorname{id} \otimes \omega) )\bigr)^{\mathcal{U}}.
\]

\
\end{lem}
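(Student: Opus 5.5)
The plan is to obtain the inclusion by first passing through the Toeplitz--Pimsner algebra and Lemma~\ref{thm:ToeplitzIsom}, and then using Theorem~\ref{thm:strong conve} to replace $\cT_{A,\phi}$ by $A\otimes\cC$ in an ultrapower.

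\emph{Reduction to the Toeplitz--Pimsner algebra.} Since $(\cT_{B,\psi},\psi\circ E_B^\psi)\simeq(B,\psi)*(\cT,\omega)$ for any $C^*$-probability space $(B,\psi)$, applying this with $B=A\rtimes_r\Gamma$ and $\psi=\phi\circ E_A$ gives
\[
(A\rtimes_r\Gamma,\phi\circ E_A)*(\cC_1,\omega)\subset (A\rtimes_r\Gamma,\phi\circ E_A)*(\cT,\omega)\simeq \bigl(\cT_{A\rtimes_r\Gamma,\phi\circ E_A},\phi\circ E_A\circ E^{\phi\circ E_A}_{A\rtimes_r\Gamma}\bigr).
\]
By Lemma~\ref{thm:ToeplitzIsom}, the right-hand side is $\bigl((\cT_{A,\phi},E_A^\phi)*_A(A\rtimes_r\Gamma,E_A),\,\phi\circ(E_A^\phi*_A E_A)\bigr)$. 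Under this identification $s=(T+T^*)/2$ lies in the factor $\cT_{A,\phi}$. It therefore suffices to embed $\cT_{A,\phi}*_A(A\rtimes_r\Gamma)$ into the ultrapower of $(A\rtimes_r\Gamma)*_A(A\otimes\cC)$. The embedding must be the diagonal map on $A\rtimes_r\Gamma$ and must intertwine the expectations onto $A$; composing with $\phi$ then gives the states.

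\emph{Strong convergence on the Toeplitz factor.} Consider $\cT_{A,\phi}\simeq A*(\cT,\omega)$ with expectation $E_A^\phi$. I would aim to find elements $s^{(\lambda)}\in A\otimes\cC$ with
\[
\{a,\ T : a\in A\}\xlongrightarrow{s}\{a\otimes1,\ X^{(\lambda)}\}
\]
with respect to the expectations $E_A^\phi$ and $\id\otimes\omega$, where $X^{(\lambda)}$ plays the role of the isometry. This is where the hypotheses on $A$ are needed, and it is the content of Ozawa's proof that such $A$ is selfless \cite[Theorem 3]{Ozw25}, combined with the approach of \cite[Theorem 1.9]{Rob25}. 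Monotraciality, $\cZ$-stability and exactness let one find unitaries or isometries $u_\lambda\in A$ that asymptotically behave freely from $A$ with respect to $\phi$. Combining these with the free semicircular generators in $\cC$ (equivalently with $C^*_r(\bF_d)$, since the two embed into each other), one gets an asymptotically free copy of $\cT$ with amalgamation over $A$ inside $(A\otimes\cC)^{\cU}$. I would take $X^{(\lambda)}$ built from $u_\lambda\otimes l_i$-type elements, or more simply realize the copy of $\cT$ via $1\otimes l_1$ after twisting by $u_\lambda$. One then verifies that the map $A*\cT\to\prod_\cU A\otimes\cC$ is an embedding preserving the expectations. This is the main obstacle: I expect it to require exactness so that \cite[Corollary 1.2]{GE26}, or Pisier's corollary, upgrades convergence in distribution to strong (norm) convergence, and $\cZ$-stability together with uniqueness of trace to make $E_A$-moments converge to the correct free values.

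\emph{Amalgamated free product.} On the other factor I take the constant family $A\rtimes_r\Gamma$ with expectation $E_A$, which trivially converges strongly to itself. Both families are consistent on $B^{(\lambda)}=A$, since each embedding acts diagonally on $A$. Theorem~\ref{thm:strong conve} with $J=\{1,2\}$ then yields an embedding
\[
\cT_{A,\phi}*_A(A\rtimes_r\Gamma)\to\bigl((A\otimes\cC)*_A(A\rtimes_r\Gamma)\bigr)^{\cU}
\]
that intertwines $E_A^\phi*_AE_A$ with the ultraproduct of $(\id\otimes\omega)*_AE_A$. Composing with $\phi$ gives the state-preserving inclusion. Restricting to the subalgebra generated by $A\rtimes_r\Gamma$ and $s$ gives the stated inclusion, because the state determines the reduced free product.
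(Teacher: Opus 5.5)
There is a genuine gap in your reduction step: it asks for an embedding that cannot exist. You want to embed all of $\cT_{A,\phi}*_A(A\rtimes_r\Gamma)$, compatibly with the expectations onto $A$, into the ultrapower of $(A\rtimes_r\Gamma)*_A(A\otimes\cC)$. The Toeplitz factor is supposed to come from a family $X^{(\lambda)}\in A\otimes\cC$ converging strongly to $T$.

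But $\omega|_{\cC}$ is a trace, since the $s_i$ form a free semicircular system. Hence $\phi\otimes\omega$ is a trace on $A\otimes\cC$. Condition (2) of strong convergence, applied to $E_A^\phi(T^*T)=1$ and $E_A^\phi(TT^*)=\omega(TT^*)=0$, would force $\lim_{\cU}(\phi\otimes\omega)(X^{(\lambda)*}X^{(\lambda)})=1$ and $\lim_{\cU}(\phi\otimes\omega)(X^{(\lambda)}X^{(\lambda)*})=0$, which is a contradiction.

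The same obstruction rules out the final embedding you claim from Theorem~\ref{thm:strong conve}. The state $\phi\circ(E_A*_A(\id\otimes\omega))$ is tracial, because $\phi$ is $\Gamma$-invariant by uniqueness of the trace. So its ultrapower cannot contain a state-preserving image of the proper isometry $T$.

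Note also that the elements you suggest, built from $u_\lambda\otimes l_i$, lie in $A\otimes\cO_\infty$, not in $A\otimes\cC$. The proposal conflates the two algebras exactly where it matters. Moreover, a single twisted generator such as $u_\lambda\otimes l_1$ gives $X^*aX=u_\lambda^*au_\lambda\otimes 1$, which does not tend to $\phi(a)$.

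The missing idea is to run the strong-convergence argument in the non-tracial algebra and restrict only afterwards, as the paper does.

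Take Ozawa's averaged elements $T_\lambda=(2|F_\lambda|)^{-1/2}\sum_n(u_n^{(\lambda)}+u_n^{(\lambda)*})\otimes l_n\in A\otimes\cO_\infty$. They satisfy $T_\lambda^*aT_\lambda\to\phi(a)$ and $(\phi\otimes\omega)(aT_\lambda T_\lambda^*a^*)=0$; this is where the hypotheses on $A$ enter, via the proof of \cite[Theorem 3]{Ozw25}. Check strong convergence of $\{a_i\}\cup\{T_\lambda\}$ to $\{a_i\}\cup\{T\}$ consistently on $A$. Then apply Theorem~\ref{thm:strong conve} to embed $\cT_{A,\phi}*_A(A\rtimes_r\Gamma)$ into $((A\rtimes_r\Gamma)*_A(A\otimes\cO_\infty))^{\cU}$.

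Two further facts then give the statement:
\begin{enumerate}
\item Because the coefficients $u+u^*$ are self-adjoint, $(T_\lambda+T_\lambda^*)/2=(2|F_\lambda|)^{-1/2}\sum_n(u_n^{(\lambda)}+u_n^{(\lambda)*})\otimes s_n$ lies in $A\otimes\cC$.
\item Since $A\otimes\cC$ is simple, $(\id\otimes\omega)|_{A\otimes\cC}$ is nondegenerate. So $(A\rtimes_r\Gamma)*_A(A\otimes\cC)$ sits inside $(A\rtimes_r\Gamma)*_A(A\otimes\cO_\infty)$, and its ultrapower sits inside the larger ultrapower.
\end{enumerate}
Hence the subalgebra generated by $A\rtimes_r\Gamma$ and $s$ lands in the smaller ultrapower. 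Your notation $s^{(\lambda)}\in A\otimes\cC$ hints at this: only the real part of the approximate isometry can live in $A\otimes\cC$, never the isometry itself.
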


\begin{proof}
Suppose there exists an embedding
\[
\left((A \rtimes_r \Gamma) \ast_A \cT_{A,\phi}, \, E_{A} \ast_{A} E_{A}^{\phi}\right) \hookrightarrow \Bigl( (A \rtimes_r \Gamma) \ast_A (A \otimes \cO_{\infty}), \, E_A \ast_A (\operatorname{id} \otimes \omega) \Bigr)^\cU.
\]
Since $A \otimes \cC$ is simple, the conditional expectation $(\operatorname{id} \otimes \omega)|_{A \otimes \cC}$ is nondegenerate. Consequently,
\[
(A \rtimes_r \Gamma) \ast_A (A \otimes \cC) \subset (A \rtimes_r \Gamma) \ast_A (A \otimes \cO_{\infty}).
\]
By Lemma~\ref{thm:ToeplitzIsom}, we obtain the following embedding:

\begin{align*}
  \Phi:(\cT_{A\rtimes_{r}\Gamma, \phi\circ E_{A}}, \, E_{A}\circ E_{A\rtimes_{r}\Gamma}^{\phi\circ E_{A}}) 
  &\overset{\simeq}{\rightarrow} 
  (\cT_{A,\phi}, E_{A}^{\phi}) \ast_{A} (A\rtimes_{r}\Gamma, E_{A}) \\
  &\hookrightarrow 
  \Bigl( (A \rtimes_r \Gamma) \ast_A (A \otimes \cO_{\infty}), \, E_A \ast_A (\operatorname{id} \otimes \omega) \Bigr)^\cU
\end{align*}
Note that $\Phi\left(\bigl(A \rtimes_r \Gamma, \, \phi \circ E_{A}\bigr) * (\mathcal{C}_1, \omega)\right)\subset \bigl(\left(A \rtimes_{r}\Gamma\right) *_A (A \otimes\cC), \, \phi \circ (E_{A}*_{A}(\operatorname{id} \otimes \omega) )\bigr)^{\mathcal{U}}$,
In particular, we obtain the desired embedding.
Therefore, it suffices to show that there exists an embedding
\[
\left((A \rtimes_r \Gamma) \ast_A \cT_{A,\phi}, \, E_{A} \ast_{A} E_{A}^{\phi}\right) \hookrightarrow \Bigl( (A \rtimes_r \Gamma) \ast_A (A \otimes \cO_{\infty}), \, E_A \ast_A (\operatorname{id} \otimes \omega) \Bigr)^\cU.
\]

To construct the desired embedding, we apply Theorem~\ref{thm:strong conve}. To this end, we first construct indexed families of elements in $A\rtimes_{r}\Gamma$ (resp. $A \otimes \cO_{\infty}$) that converge strongly to an indexed family of generators of $A\rtimes_{r}\Gamma$ (resp. $\cT_{A,\phi}$) consistently on $A$.

To begin with, we choose a countable generating set $\{a_i : i \in I\}$ of $A$. Then the set
\[
X_1^{(\lambda)} = X_1^{(\infty)} \coloneqq \{a_i : i \in I\} \cup \{\lambda_s : s \in \Gamma\}
\]
generates $A \rtimes_{r} \Gamma$.

Next, since $A$ is a simple, exact, $\cZ$-stable, and monotracial $C^*$-algebra, 
the proof of \cite[Theorem 3]{Ozw25} implies that there exists a net $(F_{\lambda})_{\lambda \in \Lambda}$ of finite sequences of unitary elements in $A$ such that the elements
\[
T_\lambda \coloneqq (2|F_{\lambda}|)^{-1/2} \sum_n \bigl( u^{(\lambda)}_n + (u^{(\lambda)}_n)^* \bigr) \otimes l_n
\]
in $A \otimes \cO_\infty$ satisfy $T_\lambda^* a T_\lambda \to \phi(a)$ and $(\phi \otimes \omega)(a T_{\lambda} T_{\lambda}^* a^*) = 0$ for all $a \in A$, where we follow the notation of \cite{Ozw25}. Moreover, we have $(T_{\lambda} + T_{\lambda}^*)/2 \in A \otimes \cC$. Hence, for any cofinal ultrafilter $\cU$ on $\Lambda$, we obtain an embedding
\[
\pi \colon (A, \phi) \ast (\cT, \omega) \to (A \otimes \cO_\infty, \phi \otimes \omega)^\cU
\]
such that $\pi(T) = [T_{\lambda}]_\cU$.

We define
\[
X_2^{(\lambda)} \coloneqq \{a_i : i \in I\} \cup \{T_{\lambda}\} \quad \text{and} \quad X_2^{(\infty)} \coloneqq \{a_i : i \in I\} \cup \{T\}.
\]
Then $X_{2}^{(\infty)}$ generates $\cT_{A,\phi}$.

Subsequently,
we show that $(X_{i}^{(\lambda)})_{\lambda}\xlongrightarrow{s}(X_{i}^{(\infty)})_{\lambda}$ consistently on $A$ for $i=1,2$.

We remark that
\[
  \cT_{A, \phi} = \overline{\operatorname{span}} \bigl\{ T^{p_{1}} a_1 T^{p_2} a_{2} \dots a_{k} T^{p_{k}} (T^{*})^{q_{0}} b_{1} (T^{*})^{q_{1}} b_{2} (T^{*})^{q_2} \dots b_{l} (T^{*})^{q_l} : a_{i}, b_{j} \in A,\, p_{i}, q_{j} \in \bN \bigr\}.
\]

It holds that
\[
\begin{aligned}
&E_{A}^{\phi}
\Bigl(
T^{p_{0}} a_1 T^{p_1} a_{2} \dots a_{k} T^{p_{k}} (T^{*})^{q_{0}} b_{1} (T^{*})^{q_{1}} b_{2} (T^{*})^{q_2} \dots b_{l} (T^{*})^{q_l}
\Bigr) \\
&=
\begin{cases}
a_{1} \dots a_{k} b_{1} \dots b_{l},
& \text{if } p_{i} = 0 \text{ and } q_{j} = 0 \text{ for all } i, j, \\[1.5ex]
0,
& \text{otherwise}.
\end{cases}
\end{aligned}
\]
Since $\omega$ is the vacuum state on $\cO_{\infty}$, we also have
\[
\begin{aligned}
&(\operatorname{id} \otimes \omega)
\Bigl(
T_{\lambda}^{p_{0}} (a_{1} \otimes 1) T_{\lambda}^{p_1} (a_{2} \otimes 1) \dots (a_{k} \otimes 1) T_{\lambda}^{p_{k}} (T_{\lambda}^{*})^{q_{0}} (b_{1} \otimes 1) (T_{\lambda}^{*})^{q_{1}} (b_{2} \otimes 1) (T_{\lambda}^{*})^{q_2} \dots (b_{l} \otimes 1) (T_{\lambda}^{*})^{q_l}
\Bigr) \\
&=
\begin{cases}
a_{1} \dots a_{k} b_{1} \dots b_{l},
& \text{if } p_{i} = 0 \text{ and } q_{j} = 0 \text{ for all } i, j, \\[1.5ex]
0,
& \text{otherwise}.
\end{cases}
\end{aligned}
\]

Hence, for $x \in \cT_{A, \phi}$, if $\pi(x) = [x_{\lambda}]_{\cU}$, then
\[
  \pi(E_{A}^{\phi}(x)) = [(\operatorname{id} \otimes \omega)(x_{\lambda})]_{\cU}.
\]
Therefore, we have $(X_{1}^{(\lambda)}, E_{A}) \to (X_{1}^{(\infty)}, E_{A})$ and
$(X_{2}^{(\lambda)}, \operatorname{id} \otimes \omega) \to (X_{2}^{(\infty)}, E_{A}^{\phi})$ strongly consistently on $A$.

By Theorem~\ref{thm:strong conve}, it holds that $(X_{1}^{(\lambda)}, X_{2}^{(\lambda)}, E_{A}*_{A}(\operatorname{id} \otimes \omega))$ in 
$(A\rtimes_{r}\Gamma)*_{A}(A \otimes \cO_{\infty})$ 
converges strongly to $(X_{1}^{(\infty)}, X_{2}^{(\infty)}, E_A \ast_A E_A^\phi)$ in $(A \rtimes_r \Gamma) \ast_A \cT_{A, \phi}$. 
Hence there exists an embedding
\[
\left((A \rtimes_r \Gamma) \ast_A \cT_{A,\phi}, E_{A}*_{A}E_{A}^{\phi}\right) \hookrightarrow \Bigl( (A \rtimes_r \Gamma) \ast_A (A \otimes \cO_{\infty}), \, E_A \ast_A (\operatorname{id} \otimes \omega) \Bigr)^\cU.
\]

  \end{proof}

Since there exists a state-preserving embedding $(\cC, \omega) \hookrightarrow (C_r^*(\bF_2), \tau)$, we obtain the following:

\begin{cor}\label{thm:embed}
Let $(A, \phi)$ be a simple, exact, $\cZ$-stable and monotracial $C^{*}$-probability space
 and let $\Gamma$ be a countable discrete group. 
 It holds that 
 $\left(A\rtimes_{r}\Gamma, \phi\circ E_{A}\right)*\left(\cC_{1}, \omega\right)
 \subset
\left((A\rtimes_{r}\Gamma)*_{A}(A\otimes C_{r}^{*}(\bF_{2})), \phi\circ\left(E_{A}*_{A}\left(\id\otimes\tau\right)\right)\right)^{\cU}$.
\end{cor}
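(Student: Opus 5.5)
The plan is to compose the embedding of Lemma~\ref{thm:strong} with an embedding induced by a state-preserving embedding $(\cC,\omega)\hookrightarrow (C_r^*(\bF_2),\tau)$, which exists by \cite[Section~4]{Oza}. Let $\iota_0\colon(\cC,\omega)\to(C_r^*(\bF_2),\tau)$ denote this embedding.

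The first step is to tensor with $A$. Since $A$ is exact (even if neither $\cC$ nor $C_r^*(\bF_2)$ were), the minimal tensor product map
\[
\id\otimes\iota_0\colon A\otimes\cC\to A\otimes C_r^*(\bF_2)
\]
is an injective $*$-homomorphism. It fixes $A\otimes 1$, and it intertwines the conditional expectations:
\[
(\id\otimes\tau)\circ(\id\otimes\iota_0)=\id\otimes\omega .
\]
This is checked on elementary tensors, using $\tau\circ\iota_0=\omega$.

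The second step is to pass to amalgamated free products. I will use the standard functoriality of reduced amalgamated free products over $A$: if $\psi_j\colon A_j\to B_j$ are $A$-bimodular embeddings satisfying $E_{B_j}\circ\psi_j=E_{A_j}$, with all expectations nondegenerate, then $*_A\psi_j$ is an embedding intertwining the free product expectations. The reason is that it maps alternating words of centered elements to alternating words of centered elements, so the two free product expectations agree. Injectivity then follows by the same nondegeneracy argument as at the end of Lemma~\ref{thm:ToeplitzIsom}: an element of the kernel has all its compressions $E(xay)$ vanish, and these are preserved. The nondegeneracy of $\id\otimes\tau$ on $A\otimes C_r^*(\bF_2)$ holds because $\tau$ is faithful. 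Applying this with $\id$ on $A\rtimes_r\Gamma$ and $\id\otimes\iota_0$ gives an embedding
\[
(A\rtimes_r\Gamma)*_A(A\otimes\cC)\hookrightarrow (A\rtimes_r\Gamma)*_A(A\otimes C_r^*(\bF_2))
\]
which preserves $\phi\circ(E_A*_A(\cdot))$.

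The third step is to take ultrapowers. A state-preserving embedding induces an embedding of ultrapowers by applying it entrywise. Composing this with the inclusion from Lemma~\ref{thm:strong} yields the stated inclusion; state preservation on $(A\rtimes_r\Gamma)*(\cC_1,\omega)$ is inherited from both maps.

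The main obstacle, though mild, is justifying injectivity of the induced map on reduced amalgamated free products. The alternative route is to realize it spatially: the free product Hilbert module for $A\otimes\cC$ embeds isometrically into the one for $A\otimes C_r^*(\bF_2)$, and the representation restricts accordingly. Either argument is routine once the expectations are known to be compatible.
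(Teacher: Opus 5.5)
Your route is the paper's. The paper obtains the corollary in one line, by pushing the inclusion of Lemma~\ref{thm:strong} through the map induced by a state-preserving embedding $\iota_0\colon(\cC,\omega)\hookrightarrow(C_r^*(\bF_2),\tau)$. However, the step you single out as the main obstacle is not settled by either argument you give.

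Write $B:=(A\rtimes_r\Gamma)*_A(A\otimes C_r^*(\bF_2))$, $E_B:=E_A*_A(\id\otimes\tau)$ and $S:=(A\rtimes_r\Gamma)*_A(A\otimes\cC)$. Let $D\subset B$ be the $C^*$-algebra generated by $A\rtimes_r\Gamma$ and $(\id\otimes\iota_0)(A\otimes\cC)$, and let $\Psi_0$ be the induced map on the algebraic amalgamated free product. Agreement of the expectations on centered alternating words only yields $\|x\|_S\le\|\Psi_0(x)\|_B$. That is, it gives a contractive surjection $\rho\colon D\to S$, and your spatial route (restricting to the invariant submodule) produces the same $\rho$. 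It does not give the bound $\|\Psi_0(x)\|_B\le\|x\|_S$ needed to define a map on the reduced algebra $S$.

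The kernel argument from Lemma~\ref{thm:ToeplitzIsom} presupposes a $*$-homomorphism already defined on $S$, which is exactly what is at stake. Applied to $\rho$ instead, it would require nondegeneracy of $E_B|_D$, and nondegeneracy is not inherited by subalgebras. In the amalgamated setting, embedding results for reduced free products are typically proved under extra hypotheses, such as compatible conditional expectations $B_j\to A_j$. So ``nondegenerate expectations'' alone is not a safe standard fact to quote.

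In the present setting the fix is short. By uniqueness of the trace, $\phi$ is $\Gamma$-invariant, so $\phi\circ E_A$ and $\phi\otimes\tau$ are traces over the trace $\phi$ on $A$. Hence $\phi\circ E_B$ is a tracial state on $B$; this is checked on alternating words using bimodularity. It is GNS-faithful because $E_B$ is nondegenerate and $\phi$ is faithful, and a GNS-faithful trace is faithful. Thus $E_B$ is faithful, so $E_B|_D$ is faithful and in particular nondegenerate.

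Now $A\rtimes_r\Gamma$ and $(\id\otimes\iota_0)(A\otimes\cC)$ are free over $A$ with respect to $E_B|_D$, with restricted expectations $E_A$ and $\id\otimes\omega$. Uniqueness of reduced amalgamated free products then gives $(D,E_B|_D)\cong(S,E_A*_A(\id\otimes\omega))$, which is the embedding you need. The ultrapower step is then exactly as you describe.

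A minor point: exactness of $A$ plays no role in the injectivity of $\id\otimes\iota_0$, since minimal tensor products always preserve inclusions.
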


\begin{thm}
Let $(A, \phi)$ be a separable, simple, exact, $\cZ$-stable, and monotracial $C^{*}$-probability space 
and let $\Gamma$ be a countable discrete group 
having a topologically free extreme boundary. 
If $\sigma : \Gamma \curvearrowright A$ is an almost periodic action, 
then $(A\rtimes_{r}\Gamma, \phi\circ E_{A})$ is selfless.
\end{thm}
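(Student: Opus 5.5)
We will prove the theorem through Robert's criterion \cite[Theorem 2.6]{Rob25}: it suffices to show that the first factor embedding $(A\rtimes_r\Gamma,\phi\circ E_A)\to(A\rtimes_r\Gamma,\phi\circ E_A)*(\cC_1,\omega)$ is existential, since $\cC_1\neq\bC$. We also need $A\rtimes_r\Gamma\not\simeq\bC$, which is clear.

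The first step identifies the right-hand side of Corollary~\ref{thm:embed} with a crossed product. We will show that
\[
\bigl((A\rtimes_{\sigma,r}\Gamma)*_A(A\otimes C_r^*(\bF_2)),\ E_A*_A(\id\otimes\tau)\bigr)\simeq\bigl(A\rtimes_{\sigma*\id,r}(\Gamma*\bF_2),\ E_A\bigr),
\]
where $A\otimes C_r^*(\bF_2)=A\rtimes_{\id,r}\bF_2$. Both sides are generated by $A$ together with unitaries $\lambda_g$ for $g\in\Gamma$ and $\lambda_h$ for $h\in\bF_2$. A reduced word in $\Gamma*\bF_2$ gives $a\lambda_{g_1}\lambda_{h_1}\cdots$, which is an alternating product of elements of $\ker E_A$ in the two factors (using $a\in A$ and $\lambda_{g}\in\ker E_A$ for $g\neq e$). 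Hence the amalgamated free product expectation kills it exactly when the word is nontrivial. This is precisely the behavior of $E_A$ on the crossed product, so the canonical expectations agree on a dense set. The nondegeneracy of both expectations (that of $E_A$ on a reduced crossed product is standard) then gives the isomorphism, by the same kernel argument as in Lemma~\ref{thm:ToeplitzIsom}. Composing with $\phi$, Corollary~\ref{thm:embed} yields an embedding
\[
\iota\colon (A\rtimes_r\Gamma,\phi\circ E_A)*(\cC_1,\omega)\hookrightarrow \bigl(A\rtimes_{\sigma*\id,r}(\Gamma*\bF_2),\phi\circ E_A\bigr)^{\cU}.
\]
On $A\rtimes_r\Gamma$ this embedding is the diagonal embedding, since in Lemma~\ref{thm:strong} the family $X_1^{(\lambda)}=X_1^{(\infty)}$ is constant.

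Next, Corollary~\ref{thm:existential} gives an ultrafilter $\cV$ and an embedding $\rho\colon A\rtimes_{\sigma*\id,r}(\Gamma*\bF_2)\to(A\rtimes_r\Gamma)^{\cV}$ restricting to the diagonal on $A\rtimes_r\Gamma$. We also need $\rho$ to be state-preserving. In the proof of Lemma~\ref{thm:exZ}, $\rho=\tilde\pi_\phi\rtimes\tilde\lambda$ sends $a\lambda_w$ to $[a\lambda_{w_n}]$, where $w_n$ is obtained by substituting $z_n$. For a nontrivial reduced word $w$, the element $w_n$ is eventually nontrivial by the ping-pong dynamics of the axial sequence, so the traces match. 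If needed, we will add this check explicitly.

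Applying $\rho$ coordinatewise gives $\rho^{\cU}\colon(\cdot)^{\cU}\to((A\rtimes_r\Gamma)^{\cV})^{\cU}$. This iterated ultrapower embeds into a single ultrapower $(A\rtimes_r\Gamma)^{\cW}$ with $\cW=\cU\otimes\cV$, compatibly with the diagonal maps and the limit states. The composite $\rho^{\cU}\circ\iota$ is then a state-preserving embedding of $(A\rtimes_r\Gamma)*(\cC_1,\omega)$ extending the diagonal embedding of $A\rtimes_r\Gamma$. This is exactly the existentiality we need, and Robert's criterion finishes the proof.

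The main obstacle is the bookkeeping for states and diagonal maps: checking that $\rho$ preserves $\phi\circ E_A$, and that the iterated ultrapower reduces to a single one. Separability of $A$ and the countability of $\Gamma$ allow all ultrafilters to be chosen on countable index sets if needed.
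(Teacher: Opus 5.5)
Your proposal is correct and follows the paper's proof: you compose the embedding of Corollary~\ref{thm:embed} (which is diagonal on $A\rtimes_r\Gamma$) with the existential inclusion of Corollary~\ref{thm:existential}, after identifying $(A\rtimes_r\Gamma)*_A(A\otimes C_r^*(\bF_2))$ with $A\rtimes_{\sigma*\id,r}(\Gamma*\bF_2)$, and then conclude via Robert's criterion with $C=\cC_1$. The extra checks you supply (the crossed-product identification, the ping-pong argument that $\rho$ preserves $\phi\circ E_A$ and is therefore injective, and the collapse of the iterated ultrapower) fill in details the paper leaves implicit; they do not change the route.
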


\begin{proof}

By Corollary~\ref{thm:embed}, we have
$
(A\rtimes_r\Gamma)*\mathcal C_1
\subset
((A\rtimes_{r}\Gamma)
*_A
(A\otimes C_r^*(\mathbb F_2)))^{\cU}.
$

On the other hand, by Corollary~\ref{thm:existential}, the inclusion
$A\rtimes_r\Gamma
\subset
(A\rtimes_{r}\Gamma)*_{A}(A\otimes C_r^*(\mathbb F_2))
$
is existential.
Therefore, the inclusion
$
A\rtimes_r\Gamma
\subset
(A\rtimes_r\Gamma)*\mathcal C_1
$
is existential.
Hence $\left(A\rtimes_{r}\Gamma, \phi\circ E\right)$ is selfless.

\end{proof}

\section{Examples}
Recall that $\operatorname{SU}(2)$ contains a subgroup isomorphic to the free group $\bF_2$. Therefore, it suffices to construct an $\operatorname{SU}(2)$-action on a $C^*$-algebra, since its restriction to $\bF_2$ yields an almost periodic $\bF_2$-action.

\begin{examps}
\begin{enumerate}

\item
Let $\sigma$ be a finite-dimensional unitary representation of $\operatorname{SU}(2)$ on $\mathbb{C}^d$, and let $\tau$ denote the unique trace on $M_{d^\infty}$.
Then the action
\[
(\operatorname{Ad} \sigma^{\otimes \infty} \ast \operatorname{Ad} \sigma^{\otimes \infty}) \otimes \operatorname{Ad} \sigma^{\otimes \infty}: \operatorname{SU}(2) \curvearrowright \bigl( (M_{d^\infty}, \tau) \ast (M_{d^\infty}, \tau) \bigr) \otimes M_{d^\infty}
\]
is an action on a unital, separable, simple, exact, $\cZ$-stable and monotracial $C^*$-algebra. Hence,
\[
\Bigl( \bigl( (M_{d^\infty}, \tau) \ast (M_{d^\infty}, \tau) \bigr) \otimes M_{d^\infty} \Bigr) \rtimes_r \bF_2
\]
is selfless.

\item
Let $\sigma: \operatorname{SU}(2) \curvearrowright C(\operatorname{SU}(2))$ be the left translation action, where $\mu$ denotes the Haar probability measure on $\operatorname{SU}(2)$.
Then the action
\[
\sigma^{\ast \infty} \otimes \operatorname{Ad} \sigma^{\otimes \infty}
: \operatorname{SU}(2) \curvearrowright (C(\operatorname{SU}(2)), \mu)^{\ast \infty} \otimes M_{d^{\infty}}
\]
is defined on a unital, separable, simple, exact, $\cZ$-stable and monotracial $C^*$-algebra.
Therefore,
\[
\bigl( (C(\operatorname{SU}(2)), \mu)^{\ast \infty} \otimes M_{d^{\infty}} \bigr) \rtimes_{r} \bF_2
\]
is selfless.

\end{enumerate}
\end{examps}

\vspace{\baselineskip}

\end{document}